\documentclass[10pt]{article}

\usepackage{amstext, amssymb,natbib, amsthm,booktabs,amsmath,graphicx,rotating,array}
\usepackage{algorithmic,algorithm}
\usepackage[margin=1in]{geometry}

\newtheorem{theorem}{Theorem}[section]

\newtheorem{example}[theorem]{Example}

\newtheorem{proposition}[theorem]{Proposition}
\def\tr{\mathop{\rm tr}\nolimits}
\newcommand{\ba}{\boldsymbol{a}}
\newcommand{\bb}{\boldsymbol{b}}

\newcommand{\bd}{\boldsymbol{d}}
\newcommand{\be}{\boldsymbol{e}}

\newcommand{\bu}{\boldsymbol{u}}
\newcommand{\bv}{\boldsymbol{v}}
\newcommand{\bw}{\boldsymbol{w}}
\newcommand{\bx}{\boldsymbol{x}}
\newcommand{\by}{\boldsymbol{y}}
\newcommand{\bz}{\boldsymbol{z}}
\newcommand{\bA}{\boldsymbol{A}}
\newcommand{\bB}{\boldsymbol{B}}
\newcommand{\bC}{\boldsymbol{C}}
\newcommand{\bD}{\boldsymbol{D}}
\newcommand{\bE}{\boldsymbol{E}}

\newcommand{\bI}{\boldsymbol{I}}
\newcommand{\bK}{\boldsymbol{K}}
\newcommand{\bL}{\boldsymbol{L}}
\newcommand{\bP}{\boldsymbol{P}}

\newcommand{\bM}{\boldsymbol{M}}

\newcommand{\bX}{\boldsymbol{X}}
\newcommand{\bU}{\boldsymbol{U}}
\newcommand{\bV}{\boldsymbol{V}}
\newcommand{\bW}{\boldsymbol{W}}
\newcommand{\bY}{\boldsymbol{Y}}

\newcommand{\balpha}{\boldsymbol{\alpha}}

\newcommand{\blambda}{\boldsymbol{\lambda}}

\newcommand{\bomega}{\boldsymbol{\omega}}

\title{\bf Path Following in the Exact Penalty Method \\
of Convex Programming}
\author{Hua Zhou \\
Department of Statistics \\
2311 Stinson Drive  \\ 
North Carolina State University \\
Raleigh, NC 27695-8203 \\
E-mail: hua\_zhou@ncsu.edu \\
\and
Kenneth Lange \\
Departments of Biomathematics, \\
Human Genetics, and Statistics \\
University of California \\
Los Angeles, CA 90095-1766\\
E-mail: klange@ucla.edu \\
}

\begin{document}
\maketitle

\begin{footnotetext}[1]
{Research supported in part by USPHS grants
GM53275 and MH59490 to KL, R01 HG006139 to KL and HZ, and NCSU FRPD grant to HZ.}
\end{footnotetext}

\baselineskip=20pt

\begin{abstract}
Classical penalty methods solve a sequence of unconstrained problems that put greater and greater stress on meeting the constraints.  In the limit as the penalty constant tends to $\infty$, one recovers the constrained solution.  In the exact penalty method, squared penalties are replaced by absolute value penalties, and the solution is recovered for a finite value of the penalty constant.   In practice, the kinks in the penalty and the unknown magnitude of the penalty constant prevent wide application of the exact penalty method in nonlinear programming. In this article, we examine a strategy of path following consistent with the exact penalty method. Instead of performing optimization at a single penalty constant, we trace the solution as a continuous function of the penalty constant. Thus, path following starts at the unconstrained solution and follows the solution path as the penalty constant increases. In the process, the solution path hits, slides along, and exits from the various constraints. For quadratic programming, the solution path is piecewise linear and takes large jumps from constraint to constraint. For a general convex program, the solution path is piecewise smooth, and path following operates by numerically solving an ordinary differential equation segment by segment. Our diverse applications to a) projection onto a convex set, b) nonnegative least squares, c) quadratically constrained quadratic programming, d) geometric programming, and e) semidefinite programming illustrate the mechanics and potential of path following. The final detour to image denoising demonstrates the relevance of path following to regularized estimation in inverse problems. In regularized estimation, one follows the solution path as the penalty constant decreases from a large value. \\
\\
{\bf Keywords:} constrained convex optimization, exact penalty, geometric programming, ordinary differential equation, quadratically constrained quadratic programming, regularization, semidefinite programming
\end{abstract}

\section{Introduction}

Penalties and barriers are both potent devices for solving constrained optimization problems
\citep{BoydVandenberghe04Book,Forsgren02interior,LuenbergerYe08Book,NocedalWright06Book,Ruszczynski06Book,Zangwill67Penalty}.
The general idea is to replace hard constraints by penalties or barriers and then exploit the well-oiled
machinery for solving unconstrained problems. Penalty methods operate on
the exterior of the feasible region and barrier methods on the interior.
The strength of a penalty or barrier is determined by a tuning constant.
In classical penalty methods, a single global tuning constant is gradually
sent to $\infty$; in barrier methods, it is gradually sent to 0. Either strategy
generates a sequence of solutions that converges in practice to the
solution of the original constrained optimization problem.

Barrier methods are now generally conceded to offer a better approach to
solving convex programs than penalty methods. Application of log barriers and
carefully controlled versions of Newton's method make it possible to follow the central path reliably and
quickly to the constrained minimum \citep{BoydVandenberghe04Book}. Nonetheless, penalty methods
should not be ruled out. Augmented Lagrangian methods \citep{Hestenes75OptmBook} and exact penalty methods
\citep{NocedalWright06Book} are potentially competitive with interior point methods for smooth
convex programming problems.   Both methods have the advantage that the solution
of the constrained problem kicks in for a finite value of the penalty constant.  This avoids
problems of ill conditioning as the penalty constant tends to $\infty$.

The disadvantage of exact penalties over traditional quadratic penalties
is lack of differentiability of the penalized objective function.   In the current paper, we argue that this impediment can be
finessed by path following.  Our path following method starts at the unconstrained
solution and follows the solution path as the penalty constant increases. In the process, the
solution path hits, exits, and slides along the various constraint boundaries.  The path itself is
piecewise smooth with kinks at the boundary hitting and escape times.  One advances along the path
by numerically solving a differential equation for the Lagrange multipliers of the penalized problem. In the special case of quadratic programming with affine constraints, the solution path is piecewise linear, and one can easily anticipate entire path segments \citep{ZhouLange11LSPath}. This special case is intimately related to the linear complementarity problem \citep{CottlePangStone92LinearComplementarity} in optimization theory.

Homotopy (continuation) methods for the solution of  nonlinear equations and optimization problems have been pursued for many years and enjoyed a variety of successes \citep{NocedalWright06Book,Watson86Homotopy,Watson01Homotopy,zangwill1981pathways}. To our knowledge, however, there has been no exploration of path following as an implementation of the exact penalty method. Our modest goal here is to assess the feasibility and versatility of exact path following for constrained optimization.
Comparing its performance to existing methods, particularly the interior point
method, is probably best left for later, more practically oriented papers.  In our experience,
coding the algorithm is straightforward in Matlab.  The rich numerical resources
of {\sc Matlab} include differential equation solvers that alert the user when certain events
such as constraint hitting and escape occur.

The rest of the paper is organized as follows. Section \ref{sec:exact-penalty-method} briefly reviews the exact penalty method for optimization and investigates sufficient conditions for uniqueness and continuity of the solution path. Section \ref{sec:path-algorithm} derives the path following strategy for general convex programs,  with particular attention to the special cases of quadratic programming and convex optimization with affine constraints. Section \ref{sec:path-exaples} presents various applications of the path algorithm. Our most elaborate example demonstrates the
relevance of path following to regularized estimation.  The particular problem treated, image denoising, is typical of many inverse problems in applied mathematics and statistics \cite{ZhouWu11EPSODE}. In such problems one follows the solution path as the penalty constant decreases. Finally, Section \ref{sec:conclusions} discusses the limitations of the path algorithm and hints at future generalizations.

\section{Exact Penalty Methods}
\label{sec:exact-penalty-method}

In this paper we consider the convex programming problem of minimizing the convex objective function $f(\bx)$ subject to $r$ affine equality constraints $g_i(\bx) = 0$ and $s$ convex inequality constraints $ h_j(\bx) \le 0$. We will further assume that $f(\bx)$ and the $h_j(\bx)$ are twice differentiable. The differential $df(\bx)$ is the row vector of partial derivatives of $f(\bx)$;  the gradient $\nabla f(\bx)$ is the transpose of $df(\bx)$. The second differential $d^2f(\bx)$ is the Hessian matrix of second partial derivatives of $f(\bx)$.  Similar conventions hold for the differentials of the constraint functions.

Exact penalty methods \citep{NocedalWright06Book,Ruszczynski06Book} minimize the surrogate function
\begin{eqnarray}
{\mathcal E}_{\rho}(\bx) & = & f(\bx)+\rho \sum_{i=1}^r |g_i(\bx)|+\rho \sum_{j=1}^s \max\{0,h_j(\bx)\}.    \label{eqn:exact-pen-obj}
\end{eqnarray}
This definition of ${\mathcal E}_{\rho}(\bx) $ is meaningful regardless of whether the contributing functions are convex.   If the program is convex, then ${\mathcal E}_{\rho}(\bx)$ is itself convex.  It is interesting to compare ${\mathcal E}_{\rho}(\bx)$ to the Lagrangian function
\begin{eqnarray*}
{\mathcal L}(\bx) & = & f(\bx)+ \sum_{i=1}^r \lambda_i g_i(\bx)+\sum_{j=1}^s \mu_j h_j(\bx) ,
\end{eqnarray*}
which captures the behavior of $f(\bx)$ near a constrained local minimum $\by$.
The Lagrangian satisfies the stationarity condition $\nabla {\mathcal L}(\by) ={\bf 0}$;
its inequality multipliers $\mu_j$ are nonnegative and satisfy the complementary slackness conditions
$\mu_j h_j(\by)=0$.  In an exact penalty method one takes
\begin{eqnarray}
\rho & > & \max \{|\lambda_1|,\ldots,|\lambda_r|,\mu_1,\ldots,\mu_s\}. \label{eqn:big-rho}
\end{eqnarray}
This choice creates the favorable circumstances
\begin{eqnarray*}
{\mathcal L}(\bx) & \le & {\mathcal E}_{\rho}(\bx) \quad  \mbox{for all} \: \bx \\
{\mathcal L}(\bz) & \le & f(\bz) \: = \: {\mathcal E}_{\rho}(\bz) \quad \mbox{for all feasible} \: \bz \\
{\mathcal L}(\by) & = & f(\by) \: = \: {\mathcal E}_{\rho}(\by) \quad \mbox{for} \; \by \; \mbox{optimal}
\end{eqnarray*}
with profound consequences.  As the next proposition proves, minimizing ${\mathcal E}_{\rho}(\bx)$
is effective in minimizing $f(\bx)$ subject to the constraints.

\begin{proposition}
\label{prop:conv-coercive}
Suppose the objective function $f(\bx)$ and the constraint functions
are twice differentiable and satisfy the Lagrange multiplier rule at the
local minimum $\by$. If inequality (\ref{eqn:big-rho}) holds and
$\bv^*d^2{\mathcal L}(\by)\bv>0$ for every vector $\bv \ne {\bf 0}$ satisfying
$dg_i(\by)\bv=0$ and $dh_j(\by)\bv \le 0$ for all active inequality constraints, then
$\by$ furnishes an unconstrained local minimum of ${\mathcal E}_{\rho}(\bx)$.
For a convex program satisfying Slater's constraint qualification and
inequality (\ref{eqn:big-rho}),  $\by$ is a minimum of ${\mathcal E}_{\rho}(\bx)$ if and only if $\by$ is a minimum
of $f(\bx)$ subject to the constraints.  No differentiability assumptions are required for convex programs.
\end{proposition}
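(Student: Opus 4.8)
The plan is to treat the two assertions separately, both built on the three displayed inequalities relating ${\mathcal L}$, ${\mathcal E}_{\rho}$, and $f$. These follow at once from (\ref{eqn:big-rho}): the equality terms satisfy $\rho|g_i(\bx)| - \lambda_i g_i(\bx) \ge (\rho - |\lambda_i|)|g_i(\bx)| \ge 0$, and the inequality terms satisfy $\rho\max\{0,h_j(\bx)\} - \mu_j h_j(\bx) \ge 0$ because $0 \le \mu_j < \rho$; feasibility and complementary slackness then collapse these gaps to $0$ at a feasible point and at $\by$ respectively. I would record these elementary computations first, since both halves of the proposition reuse them.

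For the convex half I would argue purely through the Lagrangian saddle point, so that no differentiability enters. Under Slater's condition strong duality holds, and the optimal multipliers make $\by$ a global minimizer of the convex function ${\mathcal L}(\bx)$, with $\min_{\bx}{\mathcal L}(\bx) = {\mathcal L}(\by) = f(\by) = p^*$, the constrained optimal value. If $\by$ is constrained-optimal, then for every $\bx$ the chain ${\mathcal E}_{\rho}(\bx) \ge {\mathcal L}(\bx) \ge {\mathcal L}(\by) = {\mathcal E}_{\rho}(\by)$ shows that $\by$ minimizes ${\mathcal E}_{\rho}$. Conversely, if $\by$ minimizes ${\mathcal E}_{\rho}$, comparison with any constrained minimizer $\bz^*$ gives ${\mathcal E}_{\rho}(\by) \le {\mathcal E}_{\rho}(\bz^*) = p^*$, while duality gives ${\mathcal E}_{\rho}(\by) \ge {\mathcal L}(\by) = p^*$; hence every inequality is tight, ${\mathcal E}_{\rho}(\by) = {\mathcal L}(\by)$, and the nonnegative penalty gaps recorded above must all vanish. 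Because $\rho$ strictly exceeds each $|\lambda_i|$ and each $\mu_j$, vanishing of those gaps forces $g_i(\by) = 0$ and $h_j(\by) \le 0$, i.e.\ feasibility, whence ${\mathcal E}_{\rho}(\by) = f(\by) = p^*$ and $\by$ is constrained-optimal.

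For the local half I would use the standard directional second-order argument adapted to the nonsmooth ${\mathcal E}_{\rho}$. Stationarity $df(\by) = -\sum_i\lambda_i dg_i(\by) - \sum_j\mu_j dh_j(\by)$ together with complementary slackness lets me compute the one-sided directional derivative ${\mathcal E}_{\rho}'(\by;\bv) = \sum_i[\rho|dg_i(\by)\bv| - \lambda_i dg_i(\by)\bv] + \sum_{j \text{ active}}[\rho\max\{0,dh_j(\by)\bv\} - \mu_j dh_j(\by)\bv]$, which (\ref{eqn:big-rho}) shows is nonnegative for every $\bv$ and vanishes exactly on the critical cone $dg_i(\by)\bv = 0$, $dh_j(\by)\bv \le 0$. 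On this cone the affine equalities contribute nothing at second order, and the key estimate is $\rho\max\{0,\bv^* d^2h_j(\by)\bv\} \ge \mu_j \bv^* d^2h_j(\by)\bv$, which lifts the second-order form of ${\mathcal E}_{\rho}$ so that it dominates $\bv^* d^2{\mathcal L}(\by)\bv > 0$. To convert these pointwise statements into a genuine local minimum I would argue by contradiction: take $\bx_k = \by + t_k\bv_k \to \by$ with $\|\bv_k\| = 1$ and ${\mathcal E}_{\rho}(\bx_k) < f(\by) = {\mathcal E}_{\rho}(\by)$, pass to a limiting direction $\bv$, and rule out both possibilities, since directions with ${\mathcal E}_{\rho}'(\by;\bv) > 0$ are excluded by the first-order expansion and critical directions by the second-order estimate.

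The main obstacle is this last step of the local half. Because ${\mathcal E}_{\rho}$ is only piecewise smooth, the combined first- and second-order expansion must be made uniform as $\bv_k \to \bv$, and the $\max\{0,\cdot\}$ terms attached to the borderline constraints (active with $dh_j(\by)\bv = 0$) do not pass cleanly to the limit: their sign along the sequence decides whether they contribute at first or second order. The careful bookkeeping is to exploit ${\mathcal E}_{\rho}'(\by;\bv_k) \ge 0$ to discard the first-order remainder and to bound the surviving second-order coefficient below by $\tfrac12\bv^* d^2{\mathcal L}(\by)\bv$, which is precisely where the strict inequality $\rho > \mu_j$ and the strict positivity of the Lagrangian Hessian on the cone are both essential.
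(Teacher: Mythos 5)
Your proposal is correct, but it necessarily takes a different route from the paper, because the paper supplies no argument of its own: its entire proof is the remark that the hypothesis on $\bv^*d^2{\mathcal L}(\by)\bv$ is the classical second-order sufficiency condition, followed by a citation of Theorems 6.9 and 7.21 of \citep{Ruszczynski06Book} for both assertions. What you have written is, in effect, a reconstruction of those two textbook proofs. Your convex half is the standard saddle-point argument and is sound as stated: Slater's condition yields strong duality and optimal multipliers, the pointwise bound ${\mathcal L}(\bx) \le {\mathcal E}_\rho(\bx)$ gives one direction, and in the converse the strict inequalities $\rho > |\lambda_i|$ and $\rho > \mu_j$ turn the vanishing of the penalty gaps into feasibility of $\by$; no differentiability is used, exactly as the proposition promises. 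Your local half has the right skeleton as well (nonnegative directional derivative, strictly positive off the critical cone, contradiction via a limiting direction), but the obstacle you flag at the end is an artifact of your choice to expand the nonsmooth ${\mathcal E}_\rho$ to second order; you never need to. For the critical-direction case, reuse the global inequality ${\mathcal L} \le {\mathcal E}_\rho$ you recorded at the outset and Taylor-expand only the smooth function ${\mathcal L}$: if $\bx_k = \by + t_k\bv_k \to \by$ with $\|\bv_k\|=1$, $\bv_k \to \bv$, and ${\mathcal E}_\rho(\bx_k) < {\mathcal E}_\rho(\by)$, then ${\mathcal E}_\rho(\bx_k) \ge {\mathcal L}(\bx_k) = {\mathcal E}_\rho(\by) + \tfrac{t_k^2}{2}\bv_k^* d^2{\mathcal L}(\by)\bv_k + o(t_k^2)$, since $d{\mathcal L}(\by)={\bf 0}$ and ${\mathcal L}(\by) = f(\by) = {\mathcal E}_\rho(\by)$; hence $\bv^* d^2{\mathcal L}(\by)\bv \le 0$ in the limit, so $\bv$ cannot lie in the critical cone and therefore ${\mathcal E}_\rho'(\by;\bv) > 0$. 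In that remaining first-order case the only expansion of nonsmooth terms required is $|g_i(\bx_k)| = t_k|dg_i(\by)\bv_k|$ (the equalities are affine) and $\max\{0,h_j(\bx_k)\} \ge t_k\max\{0,dh_j(\by)\bv_k\} - Ct_k^2$, whose error is uniform in the direction because $\max\{0,\cdot\}$ is $1$-Lipschitz; continuity of $\bv \mapsto {\mathcal E}_\rho'(\by;\bv)$ then gives ${\mathcal E}_\rho(\bx_k) - {\mathcal E}_\rho(\by) \ge \tfrac{1}{2}t_k\,{\mathcal E}_\rho'(\by;\bv) - Ct_k^2 > 0$ for large $k$, the desired contradiction, so the borderline constraints never require second-order bookkeeping. (One harmless imprecision: ${\mathcal E}_\rho'(\by;\cdot)$ vanishes only on a subset of the critical cone, since an active constraint with $\mu_j>0$ and $dh_j(\by)\bv<0$ contributes positively; this only strengthens your dichotomy.) In short, the paper buys brevity by delegation to a reference, while your argument, once the second-order step is rerouted through the ${\mathcal L}$-comparison, is a complete self-contained proof.
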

\begin{proof}
The conditions imposed on the quadratic form $\bv^*d^2{\mathcal L}(\by)\bv$ are well-known sufficient conditions
for a local minimum.  Theorems 6.9 and 7.21 of the reference \cite{Ruszczynski06Book}
prove all of the foregoing assertions.
\end{proof}

As previously stressed, the exact penalty method turns a constrained optimization problem into an unconstrained minimization problem. Furthermore, in contrast to the quadratic penalty method \citep[Section 17.1]{NocedalWright06Book}, the constrained solution in the exact method is achieved for a finite value of $\rho$. Despite these advantages, minimizing the surrogate function ${\mathcal E}_{\rho}(\bx)$ is complicated.  For one thing, it is no longer globally differentiable.  For another, one
must minimize ${\mathcal E}_\rho(\bx)$ along an increasing sequence $\rho_n$ because the Lagrange multipliers (\ref{eqn:big-rho}) are usually unknown in advance.  These hurdles have prevented wide application of exact penalty methods in convex programming.

As a prelude to our derivation of the path following algorithm for convex programs, we record several properties of ${\mathcal E}_{\rho}(\bx)$ that mitigate the failure of differentiability.
\begin{proposition} \label{proposition1}
The surrogate function ${\mathcal E}_\rho(\bx)$ is increasing in $\rho$.  Furthermore,  ${\mathcal E}_{\rho}(\bx)$ is strictly convex for one $\rho>0$ if and only if it is strictly convex for all $\rho>0$.  Likewise, it  is coercive for one $\rho>0$ if and only if is coercive for all $\rho>0$. Finally, if $f(\bx)$ is strictly convex (or coercive), then all  ${\mathcal E}_{\rho}(\bx)$ are strictly convex (or coercive).
\end{proposition}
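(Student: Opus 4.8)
The plan is to isolate the penalty part and exploit that ${\mathcal E}_\rho$ depends on $\rho$ only through a single convex, nonnegative term. I would write ${\mathcal E}_\rho(\bx) = f(\bx) + \rho P(\bx)$, where $P(\bx) = \sum_{i=1}^r |g_i(\bx)| + \sum_{j=1}^s \max\{0,h_j(\bx)\}$. Because the $g_i$ are affine and the $h_j$ are convex, each $|g_i|$ and each $\max\{0,h_j\}$ is convex, so $P$ is convex; it is also manifestly nonnegative and independent of $\rho$. Thus ${\mathcal E}_\rho$ is an affine function of $\rho$ for every fixed $\bx$, and this linearity is the engine behind all four assertions. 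Monotonicity is then immediate: for $\rho_2 > \rho_1 > 0$ one has ${\mathcal E}_{\rho_2}(\bx) - {\mathcal E}_{\rho_1}(\bx) = (\rho_2-\rho_1)P(\bx) \geq 0$.

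For the two strict-convexity claims I would argue through the convexity defect. For a convex $\phi$, points $\bx \neq \by$, and $t \in (0,1)$, set $\Delta_\phi(\bx,\by,t) = t\phi(\bx) + (1-t)\phi(\by) - \phi(t\bx + (1-t)\by) \geq 0$; then $\phi$ is strictly convex precisely when $\Delta_\phi > 0$ for every such triple. Since $\Delta$ is linear in its function argument, $\Delta_{{\mathcal E}_\rho} = \Delta_f + \rho\,\Delta_P$ with $\Delta_f,\Delta_P \geq 0$ and $\rho > 0$, so $\Delta_{{\mathcal E}_\rho}(\bx,\by,t) > 0$ if and only if $\Delta_f(\bx,\by,t) > 0$ or $\Delta_P(\bx,\by,t) > 0$ --- a condition making no reference to $\rho$. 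Hence the set of triples on which strict convexity fails is identical for every $\rho > 0$, which gives the ``one iff all'' equivalence; and if $f$ is strictly convex then $\Delta_f > 0$ always, forcing $\Delta_{{\mathcal E}_\rho} > 0$ always, settling the strict-convexity half of the last sentence.

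Coercivity splits into an easy half and the real obstacle. If $f$ is coercive then ${\mathcal E}_\rho = f + \rho P \geq f$ is coercive too, and more generally raising $\rho$ preserves coercivity because ${\mathcal E}_{\rho_2} \geq {\mathcal E}_{\rho_1}$ whenever $\rho_2 \geq \rho_1$. The delicate direction is lowering $\rho$: given that ${\mathcal E}_{\rho_0}$ is coercive, one must show ${\mathcal E}_\rho$ is coercive for $0 < \rho < \rho_0$. Here I would use the identity ${\mathcal E}_\rho = \tfrac{\rho}{\rho_0}{\mathcal E}_{\rho_0} + (1-\tfrac{\rho}{\rho_0})f$, which expresses ${\mathcal E}_\rho$ as a positive multiple of the coercive function ${\mathcal E}_{\rho_0}$ plus a positive multiple of $f$; provided $f$ is bounded below, the second term cannot spoil the blow-up forced by the first, and coercivity follows. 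This bounded-below requirement is exactly the crux, and I expect it to be the main obstacle: the one-dimensional instance $f(x) = -x$ with the single affine constraint $g_1(x) = x$ yields ${\mathcal E}_\rho(x) = -x + \rho|x|$, which is coercive only for $\rho > 1$, so a hypothesis preventing $f$ from decreasing without bound along a recession direction is genuinely needed. In recession-function language, ${\mathcal E}_\rho^\infty = f^\infty + \rho P^\infty$ with $P^\infty \geq 0$, and the criterion ${\mathcal E}_\rho^\infty(\bd) > 0$ for all $\bd \neq 0$ becomes $\rho$-independent precisely when $f^\infty \geq 0$, i.e.\ when $f$ is bounded below --- which holds automatically in the path-following setting, where one starts from an attained unconstrained minimizer of $f$.
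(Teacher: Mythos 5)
Your handling of the first, second, and fourth assertions is correct and coincides in substance with the paper's own argument: the paper likewise regards ${\mathcal E}_\rho$ as a positive combination of convex functions and argues that Jensen's inequality must be strict for at least one summand, which then transfers to any other positive combination; whether one lumps the penalty into a single function $P$ or keeps the summands $|g_i|$ and $\max\{0,h_j\}$ separate is immaterial.

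On the coercivity assertion, however, you have uncovered a genuine defect in the paper itself. The paper's proof takes ${\mathcal E}_{\rho^\ast}(\bx+t_n\bv)$ bounded above along a half-line and concludes that the sequence $f(\bx+t_n\bv)$ and each of the sequences $|g_i(\bx+t_n\bv)|$ and $\max\{0,h_j(\bx+t_n\bv)\}$ remain bounded above. That inference is valid only if $f$ is bounded below along the half-line: since the penalty terms are nonnegative, the sum can stay bounded above while a penalty term blows up, provided $f$ tends to $-\infty$ at a compensating rate. Your example $f(x)=-x$, $g_1(x)=x$, ${\mathcal E}_\rho(x)=-x+\rho|x|$ satisfies every standing hypothesis of the paper ($f$ smooth and convex, $g_1$ affine) and is coercive exactly for $\rho>1$, so the ``one $\rho$ iff all $\rho$'' coercivity claim is false as stated, and the paper's proof breaks at the quoted step. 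Your repair --- assume $f$ bounded below and write ${\mathcal E}_\rho=\frac{\rho}{\rho_0}{\mathcal E}_{\rho_0}+\bigl(1-\frac{\rho}{\rho_0}\bigr)f$ for $0<\rho<\rho_0$, the direction $\rho>\rho_0$ following from monotonicity --- is correct and is essentially the minimal fix; the same hypothesis also rescues the paper's half-line argument. The added hypothesis is innocuous for path following, which starts from an attained unconstrained minimum of $f$, but it is not vacuous for the paper at large: in the semidefinite programming example $f=\tr(\bC\bX)$ is unbounded below, and the authors must graft on the term $\frac{\epsilon(\rho)}{2}\|\bX\|_{\text{F}}^2$ for precisely this reason. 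One caveat on your closing aside: $f^\infty\ge 0$ is implied by, but not equivalent to, boundedness below, and $\rho$-independence of coercivity can also occur with $f^\infty<0$ in some direction when $P$ grows superlinearly there (e.g.\ $f(x)=-2x$ with the single constraint $x^2-1\le 0$ gives ${\mathcal E}_\rho$ coercive for every $\rho>0$); since your actual proof invokes boundedness below directly, this imprecision is confined to the aside.
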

\begin{proof}
The first assertion is obvious. For the second assertion, consider more generally a finite family $u_1(\bx),\ldots,u_q(\bx)$ of convex functions, and suppose a linear combination $\sum_{k=1}^q c_k u_k(\bx)$ with positive coefficients is strictly convex. It suffices to prove that any other linear combination $\sum_{k=1}^q b_k u_k(\bx)$ with positive coefficients is strictly convex.  For any two points $\bx \ne \by$ and any scalar $\alpha \in (0,1)$, we have
\begin{eqnarray}
u_k[\alpha \bx+(1-\alpha) \by] \le \alpha u_k(\bx)+(1-\alpha)u_k(\by) . \label{jensen_ineq}
\end{eqnarray}
Since $\sum_{k=1}^q c_k u_k(\bx)$ is strictly convex, strict inequality must hold for at least one $k$.  Hence, multiplying inequality
(\ref{jensen_ineq}) by $b_k$ and adding gives
\begin{eqnarray*}
\sum_{k=1}^q b_k u_k[\alpha \bx+(1-\alpha) \by] & < & \alpha \sum_{k=1}^q b_k u_k(\bx)+(1-\alpha) \sum_{k=1}^q b_k u_k(\by) .
\end{eqnarray*}
The third assertion follows from the fact that a convex function is coercive if and only if its restriction to each half-line is coercive \citep[Proposition 3.2.2]{Bertsekas03ConvexBook}. Given this result, suppose ${\mathcal E}_\rho(\bx)$ is coercive, but  ${\mathcal E}_{\rho^\ast}(\bx)$ is not coercive.  Then there exists a point $\bx$, a direction $\bv$, and a sequence of scalars $t_n$ tending to $\infty$ such that ${\mathcal E}_{\rho^\ast}(\bx+t_n\bv)$ is bounded above.  This requires the sequence $f(\bx+t_n \bv)$ and each of the sequences $|g_i(\bx+t_n \bv)|$ and $\max\{0,h_j(\bx+t_n \bv)$ to remain bounded above. But in this circumstance the sequence ${\mathcal E}_\rho(\bx+t_n \bv)$ also remains bounded above. The final two assertions are obvious.
\end{proof}

\section{The Path Following Algorithm}
\label{sec:path-algorithm}

In this section, we take a different point of view. Instead  of minimizing ${\mathcal E}_{\rho}(\bx)$ for an increasing sequence $\rho_n$, we study how the solution $\bx(\rho)$ changes continuously with $\rho$ and devise a path following strategy starting from $\rho=0$. For some finite value of $\rho$,  the path locks in on the solution of the original convex program.  In regularized statistical estimation and inverse problems, the primary goal is to select relevant predictors rather than to find a constrained solution.  Thus, the entire solution path commands more interest than any single point
along it \citep{EfronHastieIainTibshirani04LARS,OsbornePresnellTurlach00LassoAlgo,TibshiraniTaylor10GenLasso,ZhouLange11LSPath,ZhouWu11EPSODE}.  Although our theory will focus on constrained estimation, readers should bear in mind this second application area of path following.

The path algorithm relies critically on the first order optimality condition that characterizes the optimum point of the convex function
${\mathcal E}_\rho(\by)$.
\begin{proposition}
For a convex program, a point $\bx=\bx(\rho)$ minimizes  the function ${\mathcal E}_\rho(\by)$  if and only if $\bx$ satisfies the
stationarity condition
\begin{eqnarray}
{\bf 0} & = & \nabla f(\bx) + \rho \sum_{i=1}^r s_i \nabla g_i(\bx) + \rho \sum_{j=1}^s t_j \nabla h_j(\bx) \label{path_stationary}
\end{eqnarray}
for coefficient sets $\{s_i\}_{i=1}^r$ and $\{t_j\}_{j=1}^s$. These sets can be characterized as
\begin{align}
   s_i \in \begin{cases}
   \{-1\} & g_i(\bx) < 0 \\
   [-1, 1] & g_i(\bx) = 0    \\
   \{1\} & g_i(\bx) > 0
   \end{cases} \hspace{.2in} \mbox{ and } \hspace{0.2in} t_j \in \begin{cases}
   \{0\} & h_j(\bx) < 0 \\
   [0, 1] & h_j(\bx) = 0    \\
   \{1\} & h_j(\bx) > 0
   \end{cases}. \label{eqn:path-subgradient}
\end{align}
At most one point achieves the minimum of ${\mathcal E}_\rho(\by)$ for a given $\rho$ when ${\mathcal E}_\rho(\by)$ is strictly convex.
\end{proposition}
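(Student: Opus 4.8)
The plan is to treat this as a direct application of convex subdifferential calculus. Since the program is convex, the surrogate ${\mathcal E}_\rho(\bx)$ is a finite-valued convex function on all of $\mathbb{R}^n$, as noted immediately after its definition (\ref{eqn:exact-pen-obj}). Hence Fermat's rule applies in its sharpest form: a point $\bx$ minimizes ${\mathcal E}_\rho$ globally if and only if ${\bf 0} \in \partial {\mathcal E}_\rho(\bx)$, where $\partial$ denotes the convex subdifferential. This single characterization delivers \emph{both} directions of the ``if and only if'' at once, so the entire task reduces to computing $\partial {\mathcal E}_\rho(\bx)$ explicitly and matching it to the stationarity condition (\ref{path_stationary}).

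First I would invoke the Moreau--Rockafellar sum rule. Because $f$, each $|g_i|$, and each $\max\{0,h_j\}$ is finite everywhere, all effective domains equal $\mathbb{R}^n$, the relative interiors overlap trivially, and the sum rule holds with no constraint qualification:
\[
\partial {\mathcal E}_\rho(\bx) = \nabla f(\bx) + \rho \sum_{i=1}^r \partial |g_i|(\bx) + \rho \sum_{j=1}^s \partial\big(\max\{0,h_j\}\big)(\bx).
\]
The contribution from $f$ is the singleton $\{\nabla f(\bx)\}$, since $f$ is differentiable.

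Next I would evaluate the two penalty subdifferentials by writing each summand as a pointwise maximum of smooth convex functions and applying the standard rule that the subdifferential of such a maximum is the convex hull of the gradients of the \emph{currently active} pieces. For the equality penalties I would use $|g_i(\bx)| = \max\{g_i(\bx), -g_i(\bx)\}$, whose pieces are affine since $g_i$ is affine; the active-set rule then yields $\{\nabla g_i(\bx)\}$ when $g_i(\bx)>0$, $\{-\nabla g_i(\bx)\}$ when $g_i(\bx)<0$, and $\{s_i \nabla g_i(\bx) : s_i \in [-1,1]\}$ when $g_i(\bx)=0$. For the inequality penalties I would regard $\max\{0,h_j(\bx)\}$ as the maximum of the constant $0$ and the differentiable convex $h_j$; the same rule gives $\{{\bf 0}\}$, $\{\nabla h_j(\bx)\}$, and $\{t_j \nabla h_j(\bx) : t_j \in [0,1]\}$ in the cases $h_j(\bx)<0$, $h_j(\bx)>0$, and $h_j(\bx)=0$, respectively. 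Substituting these into the sum and imposing ${\bf 0} \in \partial {\mathcal E}_\rho(\bx)$ reproduces exactly (\ref{path_stationary}) together with the coefficient sets (\ref{eqn:path-subgradient}).

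The uniqueness claim is then immediate: if ${\mathcal E}_\rho$ is strictly convex and two distinct points $\bx_1 \ne \bx_2$ both attained the minimum value $m$, strict convexity would force ${\mathcal E}_\rho\big(\tfrac12 \bx_1 + \tfrac12 \bx_2\big) < \tfrac12 m + \tfrac12 m = m$, contradicting minimality. I expect the only delicate point to be the bookkeeping at the boundary cases $g_i(\bx)=0$ and $h_j(\bx)=0$, where two pieces are simultaneously active and the subdifferential swells from a singleton into a line segment, producing the interval-valued $s_i$ and $t_j$; one must also confirm the sum and max rules genuinely apply, but both are automatic here because every summand is a finite convex function on the whole space.
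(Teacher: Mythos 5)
Your proposal is correct, and its skeleton matches the paper's proof: Fermat's rule reduces minimization to the membership ${\bf 0} \in \partial {\mathcal E}_\rho(\bx)$ (which indeed delivers both directions of the equivalence at once), the subdifferential is then decomposed term by term via the sum rule, and uniqueness follows from strict convexity. The one point where you genuinely diverge is the evaluation of the penalty subdifferentials. The paper applies the chain rule of convex calculus, treating $|g_i(\bx)|$ and $\max\{0,h_j(\bx)\}$ as compositions of the scalar convex functions $|s|$ and $t_+=\max\{t,0\}$ with smooth inner functions, so that the coefficient sets in (\ref{eqn:path-subgradient}) appear directly as the subdifferentials $\partial |s|$ and $\partial t_+$ evaluated at $g_i(\bx)$ and $h_j(\bx)$. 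You instead write each penalty as a pointwise maximum of smooth convex pieces, namely $|g_i|=\max\{g_i,-g_i\}$ (legitimate precisely because the $g_i$ are affine) and $\max\{0,h_j\}$, and invoke the active-piece convex-hull rule for maxima. The two lemmas are equally standard and produce identical sets; your route has the advantage that the interval-valued coefficients at the boundary cases $g_i(\bx)=0$ and $h_j(\bx)=0$ emerge transparently as convex hulls of two gradients, whereas the paper's chain-rule formulation isolates more cleanly where the structural hypotheses enter (affineness of $g_i$, convexity of $h_j$ together with monotonicity of $t_+$) and generalizes more readily to penalties that do not admit a finite max decomposition. Your midpoint argument for uniqueness is exactly the standard fact the paper cites without proof.
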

\begin{proof}
According to Fermat's rule, $\bx$ minimizes ${\mathcal E}_\rho(\by)$ if and only if ${\bf 0}$ belongs to the subdifferential $\partial {\mathcal E}_\rho(\bx)$ of ${\mathcal E}_\rho(\by)$.  To derive the subdifferential displayed in equations (\ref{path_stationary}) and (\ref{eqn:path-subgradient}), one applies the addition and chain rules of the convex calculus.  The sets defining the possible values of $s_i$ and $t_j$ are the subdifferentials of the functions $|s|$ and $t_+ = \max\{t,0\}$, respectively. For more details see Theorem 3.5 and ancillary material in the book \citep{Ruszczynski06Book}. Finally, it is well known that strict convexity guarantees a unique minimum.
\end{proof}

To speak coherently of solution paths, one must validate the existence, uniqueness, and continuity of the solution $\bx(\rho)$ to the system of equations (\ref{eqn:exact-pen-obj}).  Uniqueness follows from strict convexity as already noted. Existence and continuity are more subtle.
\begin{proposition}
\label{prop:solpath-uniq-cont}
If ${\mathcal E}_\rho(\by)$ is strictly convex and coercive, then the solution path $\bx(\rho)$ of equation (\ref{eqn:exact-pen-obj}) exists and is continuous in $\rho$. If the gradient vectors $\{ \nabla g_i(\bx): g_i(\bx) = 0\} \cup \{\nabla h_j(\bx): h_j(\bx)=0\}$ of the active constraints are linearly independent at $\bx(\rho)$ for $\rho>0$, then the coefficients $s_i(\rho)$ and $t_j(\rho)$ are unique and continuous near $\rho$ as well.
\end{proposition}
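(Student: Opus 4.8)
The plan is to treat the two halves separately, since the existence--continuity of $\bx(\rho)$ rests on convex-analytic compactness arguments while the uniqueness--continuity of the coefficients rests on linear algebra together with a limiting argument.

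For existence and uniqueness of $\bx(\rho)$ at a fixed $\rho>0$, I would first recall that a strictly convex function has at most one minimizer, while a continuous coercive function on $\mathbb{R}^n$ attains its infimum on a compact sublevel set. Since ${\mathcal E}_\rho(\by)$ is continuous (all contributing functions are) and is assumed strictly convex and coercive, the minimizer $\bx(\rho)$ exists and is unique; by Proposition \ref{proposition1} these two properties transfer from one $\rho>0$ to all $\rho>0$, so $\bx(\rho)$ is well defined throughout. For continuity, fix $\rho>0$ and a sequence $\rho_n\to\rho$ with $0<a<\rho_n<b$ eventually. The scheme is: bound $\{\bx(\rho_n)\}$, extract a convergent subsequence, and identify its limit by uniqueness. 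Optimality of $\bx(\rho_n)$ and monotonicity of ${\mathcal E}_\rho$ in $\rho$ (Proposition \ref{proposition1}) give ${\mathcal E}_a(\bx(\rho_n))\le{\mathcal E}_{\rho_n}(\bx(\rho_n))\le{\mathcal E}_{\rho_n}(\bx(\rho))\le{\mathcal E}_b(\bx(\rho))$, a fixed constant, so coercivity of ${\mathcal E}_a$ confines $\bx(\rho_n)$ to a bounded sublevel set. Any convergent subsequence $\bx(\rho_{n_k})\to\bx^*$ satisfies ${\mathcal E}_{\rho_{n_k}}(\bx(\rho_{n_k}))\le{\mathcal E}_{\rho_{n_k}}(\by)$ for every $\by$; letting $k\to\infty$ and using joint continuity of $(\rho,\bx)\mapsto{\mathcal E}_\rho(\bx)$ yields ${\mathcal E}_\rho(\bx^*)\le{\mathcal E}_\rho(\by)$, so $\bx^*=\bx(\rho)$. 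As every subsequential limit equals $\bx(\rho)$ and the sequence is bounded, $\bx(\rho_n)\to\bx(\rho)$. (This is Berge's maximum theorem specialized to a single-valued solution map, but the direct argument is self-contained.)

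For uniqueness of the multipliers I would evaluate the stationarity condition (\ref{path_stationary}) at $\bx=\bx(\rho)$. By the subgradient characterization (\ref{eqn:path-subgradient}), every coefficient attached to an inactive constraint is pinned ($s_i=\pm1$ according to the sign of $g_i$, and $t_j=0$ or $1$ according to the sign of $h_j$). Transposing these known terms leaves a fixed vector equal to $\rho$ times a linear combination of the active constraint gradients, with the active coefficients as the only unknowns. Linear independence of those gradients then determines the coefficients uniquely, proving uniqueness of $s_i(\rho)$ and $t_j(\rho)$.

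Continuity of the multipliers is where I expect the genuine work. On any interval where the active set is constant, the active coefficients solve the full-rank normal equations $\rho\,G(\rho)^{\top}G(\rho)\,\bc=G(\rho)^{\top}\bw(\rho)$, whose data depend continuously on $\rho$ through $\bx(\rho)$ and the continuous gradients, so $\bc(\rho)$ is continuous there. The main obstacle is the kinks, that is, values of $\rho$ at which a constraint crosses between active and inactive and the matrix $G$ changes dimension. Here I would argue by compactness: the full coefficient vector lives in the fixed box $[-1,1]^r\times[0,1]^s$, so along $\rho'\to\rho$ every subsequence has a convergent sub-subsequence whose limit satisfies the stationarity equation at $\rho$ (by continuity of $\bx(\cdot)$ and of the gradients) and still obeys the sign constraints (\ref{eqn:path-subgradient}). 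Because the subgradient sets in (\ref{eqn:path-subgradient}) are closed, this limit is feasible, and the uniqueness established above forces it to be the unique multiplier vector at $\rho$; hence the limit exists and is continuous. The one delicate point to verify is exactly this closedness: it guarantees that a coefficient approaches a boundary value ($\pm1$, or $0$, $1$) precisely as its constraint deactivates, so no jump occurs at the kink.
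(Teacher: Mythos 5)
Your proof is correct, and your first half coincides with the paper's argument: the same sandwich ${\mathcal E}_a[\bx(\rho_n)] \le {\mathcal E}_{\rho_n}[\bx(\rho_n)] \le {\mathcal E}_{\rho_n}(\by) \le {\mathcal E}_b(\by)$ bounds the solutions over a compact parameter interval, and the same subsequence-plus-uniqueness step identifies the limit (the paper merely phrases it as a contradiction rather than directly). Your coefficient uniqueness argument is also the paper's in different clothing: the paper reparameterizes via $\lambda_i = \rho s_i$ and $\omega_j = \rho t_j$ and solves the stationarity condition in closed form as in equation (\ref{short_stationaryeq}), namely premultiplying by $[\bU_{\mathcal Z}(\bx)\bU_{\mathcal Z}^t(\bx)]^{-1}\bU_{\mathcal Z}(\bx)$, which is precisely your full-rank normal-equations computation. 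The genuine divergence is in continuity of the coefficients. The paper reads continuity directly off the closed-form expression (\ref{short_stationaryeq}) --- continuity of $\bx(\rho)$ and of every function appearing on its right-hand side --- an argument that is complete along a segment where the index sets (\ref{eqn:set-config}) are fixed, but which is silent at the kinks, where those sets, and hence the formula itself, change discontinuously. Your compactness argument (the coefficient vector is confined to $[-1,1]^r \times [0,1]^s$; any subsequential limit satisfies the stationarity equation at $\rho$ and lies in the closed subdifferential sets of (\ref{eqn:path-subgradient}); uniqueness at $\rho$ then forces all subsequential limits to coincide) is exactly what is needed to establish continuity across active-set changes, so on this point your proof is the more complete of the two. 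One small addition would make it fully airtight: to speak of \emph{the} coefficients at $\rho'$ near $\rho$, note that linear independence propagates to nearby parameters, since for $\rho'$ close to $\rho$ the active set at $\bx(\rho')$ is contained in the active set at $\bx(\rho)$ (by continuity of $g_i\circ\bx$ and $h_j\circ\bx$), and linear independence survives passage to subsets and small perturbations of the gradient vectors; alternatively, read the paper's hypothesis as holding at every $\rho>0$. In exchange for its terseness at the kinks, the paper's route delivers the explicit multiplier formula it needs anyway to derive the path-following ODE (\ref{eqn:sol-ode}); what your route buys is a continuity proof that actually covers the transitions between segments.
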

\begin{proof}
In accord with Proposition \ref{proposition1}, we assume that either $f(\bx)$ is strictly convex and coercive or restrict our attention to the open interval $(0,\infty)$.  Consider a subinterval $[a,b]$ and fix a point $\bx$ in the common domain of the functions ${\mathcal E}_{\rho}(\by)$.  The coercivity of
${\mathcal E}_a(\by)$ and the inequalities
\begin{eqnarray*}
{\mathcal E}_a [\bx(\rho) ] & \le & {\mathcal E}_{\rho}[\bx(\rho)] \: \le \: {\mathcal E}_{\rho}(\bx) \: \le \: {\mathcal E}_b(\bx)
\end{eqnarray*}
demonstrate that the solution vector $\bx(\rho)$ is bounded over $[a,b]$. To prove continuity, suppose that it fails for a given $\rho \in [a,b]$.  Then there exists an $\epsilon>0$ and a sequence $\rho_n$ tending to $\rho$ such $\|\bx(\rho_n)-\bx(\rho)\|_2 \ge \epsilon$ for all $n$. Since $\bx(\rho_n)$ is bounded, we can pass to a subsequence if necessary and assume that $\bx(\rho_n)$ converges to some point $\by$.  Taking limits in the inequality ${\mathcal E}_{\rho_n}[\bx(\rho_n)] \le {\mathcal E}_{\rho_n}(\bx)$ demonstrates that ${\mathcal E}_{\rho}(\by) \le {\mathcal E}_{\rho}(\bx)$ for all $\bx$. Because $\bx(\rho)$ is unique, we reach the contradictory conclusions $\|\by-\bx(\rho)\|_2 \ge \epsilon$ and $\by = \bx(\rho)$.

Verification of the second claim is deferred to permit further discussion of path following. The claim says that an active constraint ($g_i(\bx)=0$ or $h_j(\bx)=0$) remains active until its coefficient hits an endpoint of its subdifferential. Because the solution path is, in fact, piecewise smooth, one can follow the coefficient path by numerically solving an ordinary differential equation (ODE).
\end{proof}

Our path following algorithm works segment-by-segment. Along the path we keep track of the following index sets
\begin{align}
   {\mathcal N}_{\text{E}} &= \{i: g_i(\bx) < 0\} \hspace{.5in} {\mathcal N}_{\text{I}} = \{j: h_j(\bx) < 0\}  \nonumber  \\
   {\mathcal Z}_{\text{E}} &= \{i: g_i(\bx) = 0\} \hspace{.5in} {\mathcal Z}_{\text{I}} = \{j: h_j(\bx) = 0\}  \label{eqn:set-config}  \\
   {\mathcal P}_{\text{E}} &= \{i: g_i(\bx) > 0\} \hspace{.5in} {\mathcal P}_{\text{I}} = \{j: h_j(\bx) > 0\}  \nonumber
\end{align}
determined by the signs of the constraint functions.  For the sake of simplicity, assume that at the beginning of the current segment $s_i$ does not equal $-1$ or $1$ when $i \in {\mathcal Z}_{\text{E}}$ and $t_j$ does not equal $0$ or $1$ when $j \in {\mathcal Z}_{\text{I}}$. In other words, the coefficients of the active constraints occur on the interior of their subdifferentials.  Let us show in this circumstance that the solution path can be extended in a smooth fashion.  Our plan of attack is to reparameterize by the Lagrange multipliers for the active constraints.  Thus, set
$\lambda_i = \rho s_i$ for $i \in {\mathcal Z}_{\text{E}}$ and $\omega_j = \rho t_j$ for $j \in {\mathcal Z}_{\text{I}}$.  The multipliers satisfy $-\rho < \lambda_i < \rho$ and $0 < \omega_j < \rho$. The stationarity condition now reads
\begin{eqnarray*}
{\bf 0} & = & \nabla f(\bx)-\rho \sum_{i \in {\mathcal N}_{\text{E}}} \nabla g_i(\bx)+
\rho \sum_{i \in {\mathcal P}_{\text{E}}} \nabla g_i(\bx)+\rho \sum_{j \in {\mathcal P}_{\text{I}}} \nabla h_j(\bx) \\
&   & \hspace{.5in} + \sum_{i \in {\mathcal Z}_{\text{E}}} \lambda_i \nabla g_i(\bx)+\sum_{j \in {\mathcal Z}_{\text{I}}} \omega_j \nabla h_j(\bx) .
\end{eqnarray*}
To this we concatenate the constraint equations $0=g_i(\bx)$ for $i \in {\mathcal Z}_{\text{E}}$ and $0=h_j(\bx)$ for $j \in {\mathcal Z}_{\text{I}}$.

For convenience now define
\begin{align*}
& \bU_{{\mathcal Z}}(\bx) = \left[ \!\begin{array}{c} dg_{{\mathcal Z}_{\text{E}}}(\bx) \\ dh_{{\mathcal Z}_{\text{I}}}(\bx) \end{array} \!\right],
\hspace{.15in}
   \bu_{\bar{\mathcal Z}}(\bx) = - \sum_{i \in {\mathcal N}_{\text{E}}} \nabla g_i(\bx) + \sum_{i \in {\mathcal P}_{\text{E}}} \nabla g_i(\bx) + \sum_{j \in {\mathcal P}_{\text{I}}} \nabla h_j(\bx).
\end{align*}
In this notation the stationarity equation can be recast as
\begin{eqnarray*}
{\bf 0} & = & \nabla f(\bx)+\rho \bu_{\bar{\mathcal Z}}(\bx) +  \bU_{{\mathcal Z}}^t(\bx) \left[ \begin{matrix}  \blambda \\ \bomega \end{matrix} \right].
\end{eqnarray*}
Under the assumption that the matrix $ \bU_{{\mathcal Z}}(\bx) $ has full row rank, one can solve for the Lagrange multipliers in the form
\begin{eqnarray}
\left[ \begin{matrix}  \blambda_{{\mathcal Z}_{\text{E}}} \\ \bomega_{{\mathcal Z}_{\text{I}}} \end{matrix} \right]  & = & -  [ \bU_{{\mathcal Z}}(\bx)\bU_{{\mathcal Z}}^t(\bx)]^{-1}
 \bU_{{\mathcal Z}}(\bx)\left[\nabla f(\bx)+\rho \bu_{\bar{\mathcal Z}}(\bx) \right]  . \label{short_stationaryeq}
\end{eqnarray}
Hence, the multipliers are unique.  Continuity of the multipliers is a consequence of the continuity of the solution
vector $\bx(\rho)$ and all functions in sight on the right-hand side of equation (\ref{short_stationaryeq}). This observation completes the proof of Proposition \ref{prop:solpath-uniq-cont}.

Collectively the stationarity and active constraint equations can be written as the vector equation ${\bf 0} = k(\bx,\blambda,\bomega,\rho)$.  To solve for $\bx$, $\blambda$ and $\bomega$ in terms of $\rho$, we apply the implicit function theorem \citep{Lange04Optm,MagnusNeudecker99MatrixBook}. This requires calculating the differential of $k(\bx,\blambda,\bomega,\rho)$ with respect to the underlying dependent variables $\bx$, $\blambda$, and $\bomega$ and the
independent variable $\rho$.  Because the equality constraints are affine, a brief calculation gives
\begin{eqnarray*}
\partial_{\bx,\blambda,\bomega} k(\bx,\blambda,\bomega,\rho) & = &  \left[ \begin{matrix} d^2f(\bx)+ \rho \sum_{j \in {\mathcal P}_{\text{I}}} d^2h_j(\bx)+\sum_{j \in {\mathcal Z}_{\text{I}}} \omega_j d^2h_j(\bx) & \bU_{{\mathcal Z}}^t(\bx) \\ \bU_{{\mathcal Z}}(\bx) & {\bf 0}  \end{matrix} \right]\\
\partial_\rho k(\bx,\blambda,\bomega,\rho) & = & \left( \begin{matrix}  \bu_{\bar{\mathcal Z}}(\bx) \\ {\bf 0} \end{matrix} \right).
\end{eqnarray*}
The matrix $\partial_{\bx,\blambda,\bomega} k(\bx,\blambda,\bomega,\rho)$ is nonsingular when its upper-left block is positive definite and its lower-left block has full row rank \citep[Proposition 11.3.2]{Lange10NumAnalBook}.  Given that it is nonsingular, the implicit function theorem applies, and we can in principle solve for $\bx$, $\blambda$ and $\bomega$ in terms of $\rho$.  More importantly, the implicit function theorem supplies the derivative
\begin{eqnarray}
{d \over d\rho}\! \left[ \begin{matrix} \bx \\ \blambda_{{\mathcal Z}_{\text{E}}}   \\ \bomega_{{\mathcal Z}_{\text{I}}} \end{matrix} \right]
& = & - \partial_{\bx,\blambda,\bomega} k(\bx,\blambda,\bomega,\rho)^{-1}\partial_\rho k(\bx,\blambda,\bomega,\rho),  \label{eqn:sol-ode}
\end{eqnarray}
which is the key to path following. We summarize our findings in the next proposition.
\begin{proposition}
\label{prop:ode}
Suppose the surrogate function ${\mathcal E}_\rho(\by)$ is strictly convex and coercive. If at the point $\bx(\rho_0)$ the matrix
$\partial_{\bx,\blambda,\bomega} k(\bx,\blambda,\bomega,\rho)$ is nonsingular and the coefficient of each active constraints occurs on the interior of its subdifferential, then the  solution path $\bx(\rho)$ and Lagrange multipliers $\blambda(\rho)$ and $\bomega(\rho)$ satisfy the differential equation (\ref{eqn:sol-ode}) in the vicinity of $\bx(\rho_0)$.
\end{proposition}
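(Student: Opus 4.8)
The plan is to recognize that essentially all of the analytic content has already been assembled in the discussion preceding the statement, so the proof reduces to a careful application of the implicit function theorem together with a verification that the combinatorial structure of the problem is frozen near $\rho_0$. First I would argue that the index-set configuration (\ref{eqn:set-config}) is locally constant. By Proposition \ref{prop:solpath-uniq-cont} the minimizer $\bx(\rho)$ and, through equation (\ref{short_stationaryeq}), the multipliers $\blambda(\rho)$ and $\bomega(\rho)$ are continuous in $\rho$, so any constraint that is strictly inactive at $\rho_0$ stays strictly inactive for nearby $\rho$. The hypothesis that each active coefficient lies in the interior of its subdifferential translates into the strict inequalities $-\rho < \lambda_i < \rho$ for $i \in {\mathcal Z}_{\text{E}}$ and $0 < \omega_j < \rho$ for $j \in {\mathcal Z}_{\text{I}}$; continuity forces these to persist, so no active constraint exits and no new constraint activates. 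Consequently $\bU_{{\mathcal Z}}$, $\bu_{\bar{\mathcal Z}}$, and the entire system ${\bf 0} = k(\bx,\blambda,\bomega,\rho)$ retain one fixed functional form throughout a neighborhood of $\rho_0$.

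Next I would verify the hypotheses of the implicit function theorem. Because $f(\bx)$ and the $h_j(\bx)$ are twice differentiable and the equality constraints are affine, their gradients are continuously differentiable and hence $k$ is $C^1$ in all of its arguments. Its Jacobian with respect to the dependent variables is exactly the matrix $\partial_{\bx,\blambda,\bomega} k$ displayed above, which is nonsingular at $\bx(\rho_0)$ by assumption. The implicit function theorem then produces a continuously differentiable branch $\rho \mapsto (\bx(\rho),\blambda(\rho),\bomega(\rho))$ solving $k = {\bf 0}$ in the vicinity of $\rho_0$, and differentiating the identity $k(\bx(\rho),\blambda(\rho),\bomega(\rho),\rho) = {\bf 0}$ and solving the resulting linear system yields precisely the derivative formula (\ref{eqn:sol-ode}).

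Finally I would identify this implicitly defined branch with the genuine solution path. Strict convexity of ${\mathcal E}_\rho$ guarantees a unique minimizer $\bx(\rho)$, and by the first-order optimality characterization it satisfies $k = {\bf 0}$ with the same locally frozen active set; the implicit function theorem simultaneously guarantees that the solution of $k = {\bf 0}$ is locally unique. The two must therefore coincide, which establishes both the asserted differentiability of the path and the governing ordinary differential equation (\ref{eqn:sol-ode}).

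I expect the only genuine obstacle to be the first step, namely justifying that the active set, and hence the very definition of $k$, does not change in a neighborhood of $\rho_0$. Everything downstream is a mechanical invocation of the implicit function theorem once this local stability is in hand. The interior-of-subdifferential hypothesis is exactly what excludes the degenerate situation in which a multiplier sits at an endpoint $\pm\rho$ or $0$ and a constraint stands on the verge of activating or releasing; such boundary cases are precisely the kinks separating the smooth segments of the path, and they must be ruled out for the ODE to hold.
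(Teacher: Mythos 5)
Your proof is correct and follows essentially the same route as the paper: the paper's own justification is the derivation immediately preceding the proposition, which assembles the stationarity and active-constraint equations into the system ${\bf 0} = k(\bx,\blambda,\bomega,\rho)$ and invokes the implicit function theorem to obtain equation (\ref{eqn:sol-ode}). The two points you flag explicitly---local constancy of the index-set configuration (\ref{eqn:set-config}) and identification of the implicit-function-theorem branch with the true minimizer via strict convexity---are left implicit in the paper's discussion, so your write-up is simply a more careful rendering of the same argument.
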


In practice one traces the solution path along the current time segment until either an inactive constraint becomes active or the coefficient of an active constraint hits the boundary of its subdifferential. The earliest hitting time or escape time over all constraints determines the duration of the current segment. When the hitting time for an inactive constraint occurs first, we move the constraint to the appropriate active set ${\mathcal Z}_{\text{E}}$ or ${\mathcal Z}_{\text{I}}$ and keep the other constraints in place. Similarly, when the escape time for an active constraint occurs first, we move the constraint to the appropriate inactive set and keep the other constraints in place.  In the second scenario, if $s_i$ hits the value $-1$, then we move $i$ to ${\mathcal N}_{\text{E}}$; If $s_i$ hits the value $1$, then we move $i$ to ${\mathcal P}_{\text{E}}$.  Similar comments apply when a coefficient $t_j$ hits 0 or 1. Once this move is executed, we commence path following along the new segment. Path following continues until for sufficiently large $\rho$, the sets ${\mathcal N}_{\text{E}}$, ${\mathcal P}_{\text{E}}$, and ${\mathcal P}_{\text{I}}$ are exhausted, $\bu_{\bar {\mathcal Z}} = {\bf 0}$, and the solution vector $\bx(\rho)$ stabilizes.  Our previous paper \citep{ZhouLange11LSPath} suggests remedies in the very rare situations where escape times coincide.

Path following simplifies considerably in two special cases. Consider convex quadratic programming with objective function $f(\bx) = \frac 12 \bx^t A \bx + \bb^t \bx $ and equality constraints $\bV \bx = \bd$ and inequality constraints $\bW \bx \le \be$,  where $\bA$ is positive semi-definite. The exact penalized objective function becomes
\begin{eqnarray*}
{\mathcal E}_\rho(\bx) & = &    \frac 12 \bx^t A \bx + \bb^t \bx + \rho \sum_{i=1}^s |\bv_i^t \bx - d_i| + \rho \sum_{j=1}^t (\bw_j^t \bx - e_j)_+.
\end{eqnarray*}
Since both the equality and inequality constraints are affine, their second derivatives vanish. Both $\bU_{\mathcal Z}$ and $\bu_{\bar {\mathcal Z}}$ are constant on the current path segment, and the path $\bx(\rho)$ satisfies
\begin{eqnarray}
{d \over d\rho}\! \left[ \begin{matrix} \bx \\ \blambda_{{\mathcal Z}_{\text{E}}} \\ \bomega_{{\mathcal Z}_{\text{I}}} \end{matrix} \right] & = & -
\left( \begin{matrix}
\bA & \bU_{\mathcal Z}^t \\
\bU_{\mathcal Z} & {\bf 0}
\end{matrix} \right)^{-1} \left( \begin{matrix} \bu_{\bar {\mathcal Z}} \\ {\bf 0} \end{matrix} \right). \label{eqn:sol-ode-QP}
\end{eqnarray}
This implies that the solution path $\bx(\rho)$ is piecewise linear. Our previous paper \citep{ZhouLange11LSPath} is devoted entirely to this special class of problems and highlights many statistical applications.

On the next rung on the ladder of generality are convex programs with affine constraints. For the exact surrogate
\begin{eqnarray*}
{\mathcal E}_\rho(\bx) & = &     f(\bx) + \rho \sum_{i=1}^s |\bv_i^t \bx - d_i| + \rho \sum_{j=1}^t (\bw_j^t \bx - e_j)_+,
\end{eqnarray*}
the matrix $\bU_{\mathcal Z}$ and vector $\bu_{\bar {\mathcal Z}}$ are still constant along a path segment. The relevant differential equation becomes
\begin{eqnarray}
{d \over d\rho}\! \left[ \begin{matrix} \bx \\ \blambda_{{\mathcal Z}_{\text{E}}} \\ \bomega_{{\mathcal Z}_{\text{I}}} \end{matrix} \right] & = & -
\left( \begin{matrix}
d^2f(\bx) & \bU_{\mathcal Z}^t \\
\bU_{\mathcal Z} & {\bf 0}
\end{matrix} \right)^{-1} \left( \begin{matrix} \bu_{\bar {\mathcal Z}} \\ {\bf 0} \end{matrix} \right). \label{eqn:sol-ode-conv}
\end{eqnarray}
There are two approaches for computing the right-hand side of equation (\ref{eqn:sol-ode-conv}). When $\bA = d^2f(\bx)$ is positive definite and  $\bB = \bU_{\mathcal Z}$ has full row rank, the relevant inverse amounts to
\begin{eqnarray*}
 \left( \begin{matrix} \bA & \bB^t \\ \bB & {\bf 0} \end{matrix} \right)^{-1}
& = & \left( \begin{matrix} \bA^{-1} - \bA^{-1} \bB^t [\bB \bA^{-1} \bB^t]^{-1} \bB \bA^{-1} & \bA^{-1} \bB^t [\bB \bA^{-1} \bB^t]^{-1}    \\
 [\bB \bA^{-1} \bB^t]^{-1}\bB  \bA^{-1} & -[\bB \bA^{-1} \bB^t]^{-1} \end{matrix} \right).
\end{eqnarray*}
The numerical cost of computing the inverse scales as $O(n^3) + O(|{\mathcal Z}|^3)$.  When $d^2f(\bx)$ is a constant, the inverse is computed once.  Sequentially updating it for different active sets ${\mathcal Z}$ is then conveniently organized around the sweep operator of computational statistics \citep{ZhouLange11LSPath}.  For a general convex function $f(\bx)$, every time $\bx$ changes, the inverse must be recomputed.  This burden plus the cost of computing the entries of $d^2f(\bx)$ slow the path algorithm for general convex problems.

In many applications $f(\bx)$ is convex but not necessarily strictly convex.  One can circumvent problems in inverting $d^2f(\bx)$ by
reparameterizing \citep{NocedalWright06Book}.  For the sake of simplicity, suppose that all of the constraints are affine and that $\bU_{\mathcal Z}$ has full row rank.  The set of points $\bx$ satisfying the active constraints can be written as $\bx = \bw+\bY \by$, where $\bw$ is a particular solution, $\by$ is free to vary, and the columns of $\bY \in \mathbb{R}^{n \times (n-|{\mathcal Z}|)}$ span the null space of $\bU_{\mathcal Z}$ and hence are orthogonal to the rows of $\bU_{\mathcal Z}$.  Under the null space reparameterization, $\frac{d \bx}{d\rho} = \bY \frac{d \by}{d\rho}$. Furthermore,
\begin{eqnarray*}
\bY^t d^2f(\bx)\bY  & = & d_{\by}^2 f(\bw+\bY\by) \\
\bY^t \bu_{\bar{\mathcal Z}}(\bx) & = & \nabla_{\by} \Big[-\rho \sum_{i \in {\mathcal N}_{\text{E}}} g_i(\bw+\bY\by)+
\rho \sum_{i \in {\mathcal P}_{\text{E}}}  g_i(\bw+\bY\by)+\rho \sum_{j \in {\mathcal P}_{\text{I}}}  h_j(\bw+\bY\by) \Big].
\end{eqnarray*}
It follows that equation (\ref{eqn:sol-ode-conv}) becomes
\begin{eqnarray}
\frac{d }{d\rho}\by & = & - [\bY^t d^2f(\bx) \bY]^{-1} \bY^t \bu_{\bar {\mathcal Z}} \nonumber \\
\frac{d}{d \rho} \bx & = & - \bY [\bY^t d^2f(\bx) \bY]^{-1} \bY^t \bu_{\bar {\mathcal Z}}. \label{eqn:sol-ode-alt}
\end{eqnarray}
Differentiating equation (\ref{short_stationaryeq}) gives the multiplier derivatives
\begin{eqnarray}
\frac{d}{d\rho} \left[ \begin{matrix} \blambda_{{\mathcal Z}_{\text{E}}}   \\ \bomega_{{\mathcal Z}_{\text{I}}} \end{matrix} \right] &=& -(\bU_{\mathcal Z}\bU_{\mathcal Z}^t)^{-1} \bU_{\mathcal Z} \left( d^2f(\bx) \frac{d\bx}{d\rho} + \bu_{\bar {\mathcal Z}} \right). \label{eqn:sol-ode-alt2}
\end{eqnarray}

The obvious advantage of using equation (\ref{eqn:sol-ode-alt}) is that the matrix $\bY^t d^2f(\bx) \bY$ can be nonsingular when $d^2f(\bx)$ is singular. The computational cost of evaluating the right-hand sides of equations (\ref{eqn:sol-ode-alt}) and (\ref{eqn:sol-ode-alt2}) is $O([n-|{\mathcal Z}|]^3) + O(|{\mathcal Z}|^3)$. When $n-|{\mathcal Z}|$ and $|{\mathcal Z}|$ are small compared to $n$, this is an improvement over the cost $O(n^3) + O(|{\mathcal Z}|^3)$ of computing the right-hand side of equation (\ref{eqn:sol-ode}). Balanced against this gain is the requirement of finding a basis of the null space of $\bU_{\mathcal Z}$. Fortunately, the matrix $\bY$ is constant over each path segment and in practice can be computed by taking the QR decomposition of the active constraint matrix $\bU_{\mathcal Z}$. At each kink of the solution path, either one constraint enters ${\mathcal Z}$ or one leaves. Therefore, $\bY$ can be sequentially computed by standard updating and downdating formulas \citep{LawsonHanson87LSBook,NocedalWright06Book}. Which ODE (\ref{eqn:sol-ode}) or (\ref{eqn:sol-ode-alt}) is preferable depends on the specific application. When the loss function $f(\bx)$ is not strictly convex, for example when the number of parameters exceeds the number of cases in regression, path following requires the ODE (\ref{eqn:sol-ode-alt}).  Interested readers are referred to the book \citep{NocedalWright06Book} for a more extended discussion of range-space versus null-space optimization methods.

For a general convex program, one can employ Euler's update
\begin{eqnarray*}
\left[ \begin{matrix} \bx(\rho+\Delta \rho) \\ \blambda(\rho+\Delta \rho) \\ \bomega(\rho+\Delta \rho) \end{matrix}  \right] & = &
\left[ \begin{matrix} \bx(\rho) \\ \blambda(\rho) \\ \bomega(\rho) \end{matrix} \right] + \Delta \rho
{d \over d\rho}\left[ \begin{matrix} \bx(\rho) \\ \blambda(\rho) \\ \bomega(\rho) \end{matrix} \right]
\end{eqnarray*}
to advance the solution of the ODE (\ref{eqn:sol-ode}). Euler's formula may be inaccurate for $\Delta \rho$ large.  One can correct it by fixing $\rho$ and performing one step of Newton's method to re-connect with the solution path. This amounts to replacing the position-multiplier vector by
\begin{eqnarray*}
\left[\begin{matrix} \bx \\ \blambda \\ \bomega \end{matrix} \right]-
\partial_{\bx,\blambda,\bomega} k(\bx,\blambda,\bomega,\rho)^{-1}k(\bx,\blambda,\bomega,\rho) .
\end{eqnarray*}
In practice, it is certainly easier and probably safer to rely on ODE packages such as the {\tt ODE45} function in {\sc Matlab} to advance the solution of the ODE.

\section{Examples of Path Following}
\label{sec:path-exaples}

Our examples are intended to illuminate the mechanics of path following and showcase its versatility.  As we emphasized in the introduction, we forgo comparisons with other methods.  Comparisons depend heavily on programming details and problem choices, so a premature study might well be misleading.

\begin{example}
Projection onto the Feasible Region \label{projection_example}
\end{example}

Finding a feasible point is the initial stage in many convex programs. Dykstra's algorithm \citep{Dykstra83DyAlgo,Deutsch02Book}
was designed precisely to solve the problem of projecting an exterior point onto the intersection of a finite number of closed convex sets. The projection problem also yields to our generic path following algorithm. Consider the toy example of projecting a point $\bb \in \mathbb{R}^2$ onto the intersection of the closed unit ball and the closed half space $x_1 \ge 0$ \citep{Lange04Optm}. This is equivalent to solving
\begin{eqnarray*}
    &\mbox{minimize}& \hspace{.1in} f(\bx)  =  \frac 12 \|\bx - \bb\|^2 \\
    &\mbox{subject to}& \hspace{.1in} h_1(\bx)  =  \frac 12 \|\bx\|^2 - \frac 12 \le 0, \quad  h_2(\bx)  = - x_1 \le 0.
\end{eqnarray*}
The relevant gradients and second differentials are
\begin{eqnarray*}
    \nabla f(\bx) & = &  \bx - \bb, \hspace{.2in}
    \nabla h_1(\bx) = \bx, \hspace{.2in} \nabla h_2(\bx)  =  -\left( \, \begin{matrix} 1 \\ 0 \end{matrix} \, \right)  \\
    d^2f(\bx) &=& d^2h_1(\bx) \; = \;  \bI_2, \hspace{.2in} d^2h_2(\bx)  =  {\bf 0}.
\end{eqnarray*}
Path following starts from the unconstrained solution $\bx(0)=\bb$; the direction of movement is determined by formula (\ref{eqn:sol-ode}). For $\bx \in \{\bx: \|\bx\|^2>1, x_1>0\}$, the path
\begin{eqnarray*}
    \frac{d }{d\rho}\bx &=& - [(1+\rho)\bI_2]^{-1} \bx   =  - \frac{1}{1+\rho} \bx
\end{eqnarray*}
heads toward the origin. For $\bx \in \{\bx: |x_2|>1, x_1=0\}$, the path
\begin{eqnarray*}
    \frac{d}{d\rho} \left( \begin{array}{c} \bx \\ \omega_2 \end{array} \right) &=& - \left( \begin{array}{ccc} 1+\rho & 0 & -1 \\ 0 & 1+\rho & 0 \\ -1 & 0 & 0 \end{array} \right)^{-1} \left( \begin{array}{c} x_1 \\ x_2 \\ 0 \end{array} \right)  =  - \frac{1}{1+\rho} \left( \begin{array}{c} 0 \\ x_2 \\ 0 \end{array} \right)
\end{eqnarray*}
also heads toward the origin. For $\bx \in \{\bx: \|\bx\|^2>1, x_1<0\}$, the path
\begin{eqnarray*}
    \frac{d}{d\rho} \bx &=& - [(1+\rho)\bI_2]^{-1} \left( \begin{array}{c} x_1-1 \\ x_2 \end{array} \right)  =  - \frac{1}{1+\rho} \left( \begin{array}{c} x_1-1 \\ x_2 \end{array} \right).
\end{eqnarray*}
heads toward the point $(1,0)^t$.
For $\bx \in \{\bx: \|\bx\|^2=1, x_1<0\}$, the path
\begin{eqnarray*}
    \frac{d}{d\rho} \left( \begin{array}{c} \bx \\ \omega_1 \end{array} \right) &=& - \left( \begin{array}{ccc} 1+\omega_1 & 0 & x_1 \\ 0 & 1+\omega_1 & x_2 \\ x_1 & x_2 & 0 \end{array} \right)^{-1} \left( \begin{array}{c} -1 \\ 0 \\ 0 \end{array} \right)
  =  \left( \begin{array}{c} - \frac{x_2^2}{1+\omega_1} \\ \frac{x_1x_2}{1+\omega_1} \\ - x_1 \end{array} \right)
\end{eqnarray*}
is tangent to the circle. Finally, for $\bx \in \{\bx: \|\bx\|^2<1, x_1<0\}$, the path
\begin{eqnarray*}
    \frac{d}{d\rho}\bx &=& - \bI_2^{-1} \left( \begin{array}{c} -1 \\ 0 \end{array} \right)  =  \left( \begin{array}{c} 1 \\ 0 \end{array} \right)
\end{eqnarray*}
heads toward the $x_2$-axis. The left panel of Figure \ref{fig:halfdisc} plots the vector field $\frac{d}{d\rho} \bx$ at the time $\rho=0$. The right panel shows the solution path for projection from the points $(-2,0.5)^t$, $(-2,1.5)^t$, $(-1,2)^t$, $(2,1.5)^t$, $(2,0)^t$,
$(1,2)^t$, and $(-0.5,-2)^t$ onto the feasible region. In projecting the point $\bb =(-1,2)^t$ onto $(0,1)^t$, the {\sc ODE45} solver of {\sc Matlab} evaluates derivatives at 19 different time points. Dykstra's algorithm by comparison takes about 30 iterations to converge \citep{Lange04Optm}.

\begin{figure}
\begin{center}
$$
\begin{array}{cc}
\includegraphics[width=2.25in]{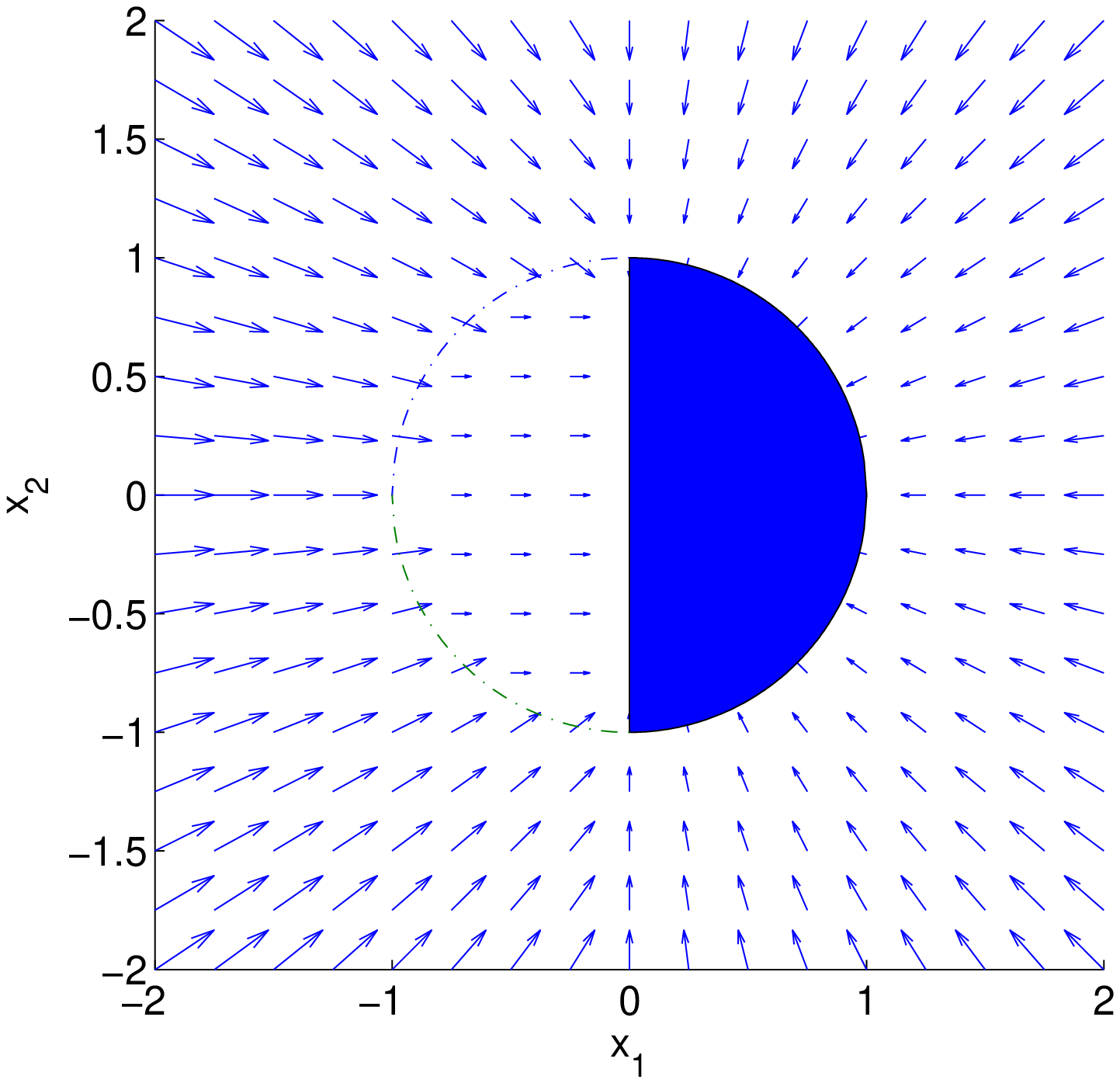} & \includegraphics[width=2.25in]{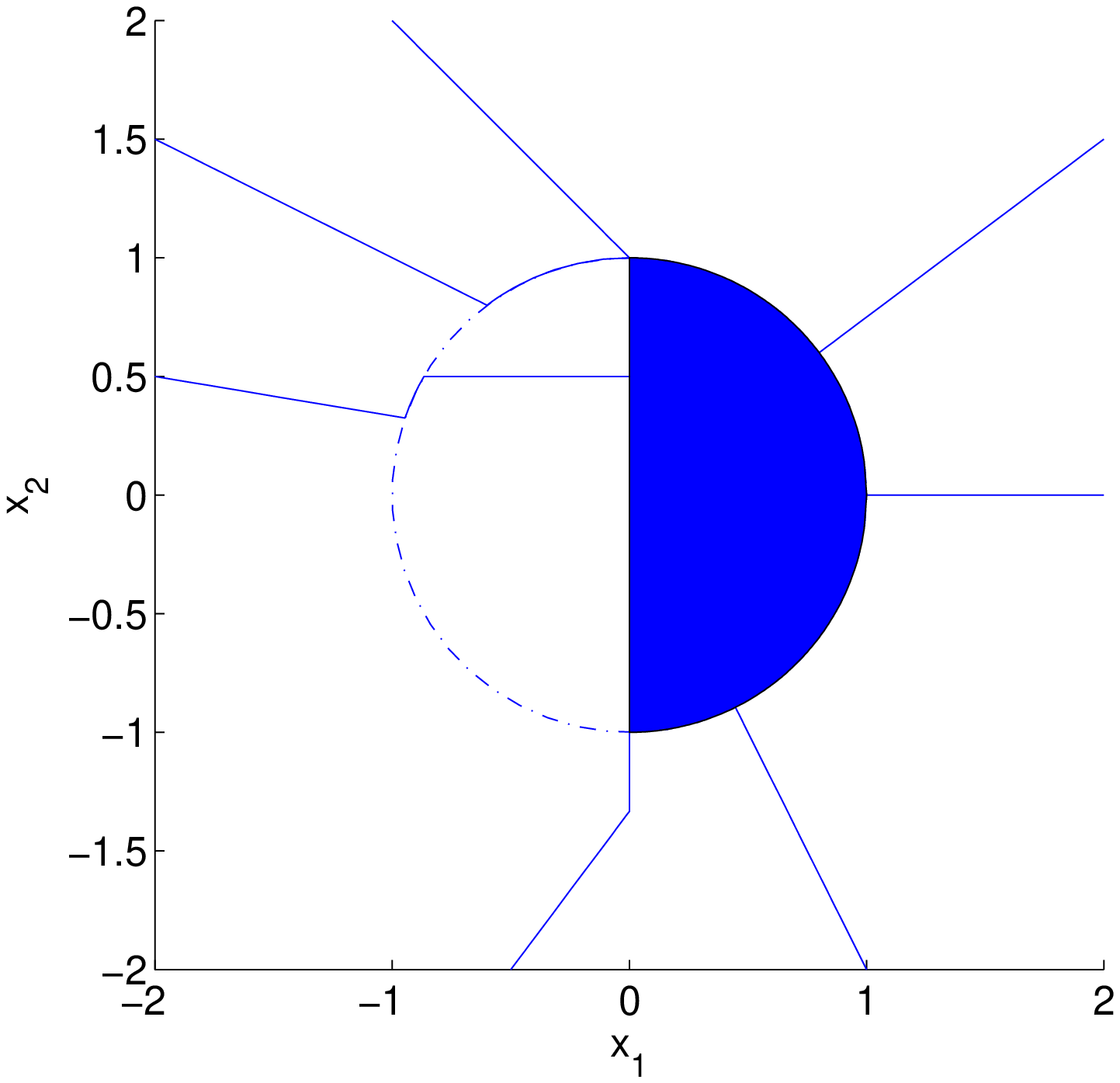}
\end{array}
$$
\end{center}
\caption{Projection to the positive half disk. Left: Derivatives at $\rho=0$ for projection onto the half disc. Right: Projection trajectories from various initial points.}
\label{fig:halfdisc}
\end{figure}

\begin{example}
Nonnegative Least Squares (NNLS) and Nonnegative Matrix Factorization (NNMF)
\end{example}

Non-negative matrix factorization (NNMF) is an alternative to principle component analysis and is useful in modeling, compressing, and interpreting nonnegative data such as observational counts and images. The articles \citep{Berry07NNMF,LeeSeung99NNMF,LeeSeung01NNMFAlgo} discuss in detail estimation algorithms and statistical applications of NNMF. The basic idea is to approximate an $m \times n$ data matrix $\bX=(x_{ij})$ with nonnegative entries by a product $\bV \bW$ of two low rank matrices $\bV=(v_{ik})$ and $\bW=(w_{kj})$ with nonnegative entries. Here $\bV$ and $\bW$ are $m \times r$ and $r \times n$ respectively, with $r \ll \min\{m,n\}$. One version of NNMF minimizes the criterion
\begin{eqnarray}
f(\bV,\bW) & = & \| \bX - \bV \bW \|_{\text{F}}^2  =
\sum_i \sum_j \Big( x_{ij} - \sum_k v_{ik} w_{kj} \Big)^2,  \label{eqn:nnmf-objfn}
\end{eqnarray}
where $\|\cdot\|_{\text{F}}$ denotes the Frobenius norm. In a typical imaging problem, $m$ (number of images) might range from $10^3$ to $10^4$, $n$ (number of pixels per image) might surpass $10^4$, and a rank $r =50$ approximation might adequately capture $\bX$.

Minimization of the objective function~(\ref{eqn:nnmf-objfn}) is nontrivial because it is not jointly convex in $\bV$ and $\bW$. Multiple local minima are possible.  The well-known multiplicative algorithm \citep{LeeSeung99NNMF,LeeSeung01NNMFAlgo} enjoys the descent property, but it is not guaranteed to converge to even a local minimum \citep{Berry07NNMF}. An alternative algorithm that exhibits better convergence is  alternating least squares (ALS). In updating $\bW$ with $\bV$ fixed, ALS solves the $n$ separated nonnegative least square (NLS) problems
\begin{eqnarray}
	\min_{\bw_j} \|\bx_j - \bV\bw_j\|_2^2 \quad \quad \text{ subject to } \bw_j \ge 0, \label{eqn:NNMF-ALS}
\end{eqnarray}
where $\bx_j$ and $\bw_j$ denote the $j$-th columns of the corresponding matrices. Similarly, in updating $\bV$ with $\bW$ fixed, ALS solves $m$ separated NNLS problems. The unconstrained solution $\bW(0) = (\bV^t \bV)^{-1} \bV^t \bX$ of $\bW$ for fixed $\bV$ requires just one QR decomposition of $\bV$ or one Cholesky decomposition of $\bV^t \bV$. The exact path algorithm for solving the subproblem problem (\ref{eqn:NNMF-ALS}) commences with $\bW(0)$.  If $\bW(\rho)$ stabilizes with just a few zeros, then the path algorithm ends quickly and is extremely efficient. For a NNLS problem, the path is piecewise linear, and one can straightforwardly project the path to the next hitting or escape time using the sweep operator \citep{ZhouLange11LSPath}. Figure \ref{fig:nnls} shows a typical piecewise linear path for a problem with $r=50$ predictors. Each projection to the next event requires $2r^2$ flops. The number of path segments (events) roughly scales as the number of negative components in the unconstrained solution.

\begin{figure}
\begin{center}
\includegraphics[width=3in]{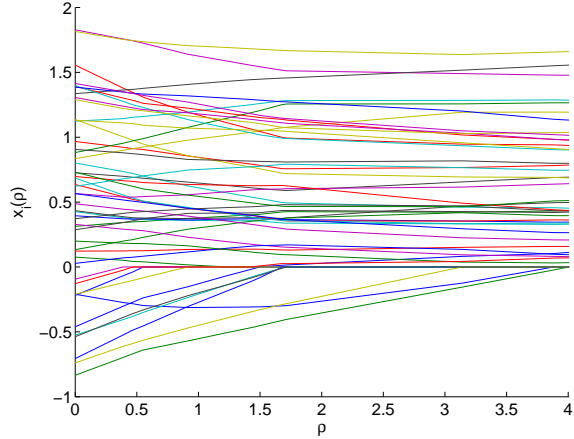}
\end{center}
\caption{Piecewise linear paths of the regression coefficients for a NNLS problem with 50 predictors.}
\label{fig:nnls}
\end{figure}

\begin{example}
Quadratically Constrained Quadratic Programming (QCQP)
\end{example}

Example \ref{projection_example} is a special case of quadratically constrained quadratic programming (QCQP). In convex QCQP \citep[Section 4.4]{BoydVandenberghe04Book}, one minimizes a convex quadratic function over an intersection of ellipsoids and affine subspaces. Mathematically,
this amounts to the problem
\begin{eqnarray*}
    \text{minimize} \hspace{.1in} f(\bx)  &=&  \frac 12 \bx^t \bP_0 \bx + \bb_0^t \bx + c_0 \\
    \text{subject to} \hspace{.1in} g_i(\bx)  &=&  \ba_i^t \bx - d_i = 0, \hspace{.1in} i=1,\ldots,r \\
    h_j(\bx)  &=&  \frac 12 \bx^t \bP_j \bx + \bb_j^t \bx + c_j \le 0, \hspace{.1in} j=1,\ldots,s,
\end{eqnarray*}
where $\bP_0$ is a positive definite matrix and the $\bP_j$ are positive semidefinite matrices. Our algorithm starts with the unconstrained minimum $\bx(0) = - \bP_0^{-1} \bb_0$ and proceeds along the path determined by the derivative
\begin{eqnarray*}
    \frac{d}{d\rho} \left[ \begin{matrix} \bx \\ \blambda_{{\mathcal Z}_{\text{E}}} \\ \bomega_{{\mathcal Z}_{\text{I}}} \end{matrix} \right] = - \left( \begin{matrix} \bP_0+ \rho \sum_{j \in {\mathcal P}_{\text{I}}} \bP_j+\sum_{j \in {\mathcal Z}_{\text{I}}} \omega_j \bP_j & \bU_{{\mathcal Z}}^t(\bx) \\ \bU_{{\mathcal Z}}(\bx) & {\bf 0}  \end{matrix} \right)^{-1} \left( \begin{matrix}  \bu_{\bar{\mathcal Z}}(\bx) \\ {\bf 0} \end{matrix} \right),
\end{eqnarray*}
where $\bU_{\mathcal Z}(\bx)$ has rows $\ba_i^t$ for $i \in {\mathcal Z}_{\text{E}}$ and $(\bP_j\bx+\bb_j)^t$ for $j \in {\mathcal Z}_{\text{I}}$, and
\begin{eqnarray*}
    \bu_{\bar {\mathcal Z}}(\bx) = - \sum_{i \in {\mathcal N}_{\text{E}}} \ba_i + \sum_{i \in {\mathcal P}_{\text{E}}} \ba_i + \sum_{i \in {\mathcal P}_{\text{I}}} (\bP_j \bx + \bb_j).
\end{eqnarray*}
Affine inequality constraints can be accommodated by setting one or more of the $\bP_j$ equal to ${\bf 0}$.

As a numerical illustration, consider the bivariate problem
\begin{eqnarray}
    \text{minimize} \hspace{.1in} f(\bx)  &=&  \frac 12 x_1^2 + x_2^2 - x_1x_2 + \frac 12 x_1 - 2x_2  \nonumber \\
    \text{subject to} \hspace{.1in} h_1(\bx)  &=&  \Big(x_1-\frac 12 \Big)^2 + x_2^2 - 1 \; \le \; 0 \; \label{eqn:QCQP-toy}  \\
    h_2(\bx)  &=&  \Big(x_1+\frac 12 \Big)^2 + x_2^2 - 1 \le  0   \nonumber \\
    h_3(\bx) &=&  x_1^2 + \Big(x_2-\frac 12 \Big)^2 - 1  \le  0. \nonumber
\end{eqnarray}
Here the feasible region is given by the intersection of three disks with centers $(0.5,0)^t$,  $(-0.5,0)^t$, and $(0,0.5)^t$, respectively, and a common radius of 1. Figure \ref{fig:3disk} displays the solution trajectory. Starting from the unconstrained minimum $\bx(0)= (1,1.5)^t$, it hits, slides along, and exits two circles before its journey ends at the constrained minimum $(0.059,0.829)^t$. The {\tt ODE45} solver of {\sc Matlab} evaluates derivatives at 72 time points along the path.

\begin{example}
Geometric Programming
\end{example}

\begin{figure}
\begin{center}
$$
\includegraphics[width=3in]{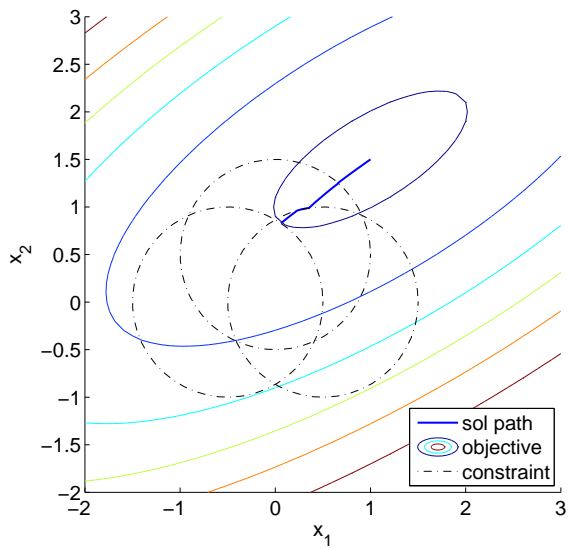}
$$
\end{center}
\caption{Trajectory of the exact penalty path algorithm for a QCQP problem (\ref{eqn:QCQP-toy}). The solid lines are the contours of the objective function $f(\bx)$. The dashed lines are the contours of the constraint functions $h_j(\bx)$.}
\label{fig:3disk}
\end{figure}

As a branch of convex optimization theory, geometric programming stands just behind linear and quadratic programming in importance \citep{Boyd07GP,Ecker80GP,Peressini88Book,Peterson76GP}. It has applications in chemical equilibrium problems \citep{PassyWilde68GP}, structural mechanics \citep{Ecker80GP}, digit circuit design \citep{Boyd05digitalcircuit}, maximum likelihood estimation \citep{MazumdarJefferson83GP}, stochastic processes \citep{FeiginUry81GP}, and a host of other subjects \citep{Boyd07GP,Ecker80GP}. Geometric programming deals with posynomials, which are functions of the form
\begin{eqnarray}
f(\bx) & = & \sum_{\balpha \in S} c_{\balpha} \prod_{i=1}^n x_i^{\alpha_{i}} \;\; = \;\; \sum_{\balpha \in S} c_{\balpha} e^{\balpha^t \by}
\: = \: f(\by).
\label{general_posynomial}
\end{eqnarray}
In the left-hand definition of this equivalent pair of definitions, the index set $S \subset \mathbb{R}^n$ is finite, and all coefficients $c_{\balpha}$ and all components $x_1,\ldots,x_n$ of the argument $\bx$ of $f(\bx)$ are positive.  The possibly fractional powers $\alpha_i$ corresponding to a particular $\balpha$ may be positive, negative, or zero.  For instance, $x_1^{-1}+2x_1^3x_2^{-2}$ is a posynomial on $\mathbb{R}^2$.  In geometric programming, one minimizes a posynomial $f(\bx)$ subject to posynomial inequality constraints of the form $h_j(\bx) \le 1$ for $1 \le j \le s$. In some versions of geometric programming, equality constraints of monomial type are permitted \citep{Boyd07GP}.  The right-hand definition in equation
(\ref{general_posynomial}) invokes  the exponential reparameterization  $x_i = e^{y_i}$.  This simple transformation has the advantage of rendering a geometric program convex.  In fact, any posynomial  $f(\by)$ in the exponential parameterization is log-convex and therefore convex.  The concise representations
\begin{eqnarray*}
\nabla f(\by) & = & \sum_{\balpha \in S} c_{\balpha} e^{\balpha^t \by} \balpha, \quad
d^2f(\by) \;\; = \;\; \sum_{\balpha \in S} c_{\balpha} e^{\balpha^t \by} \balpha \balpha^t
\end{eqnarray*}
of the gradient and the second differential are helpful in both theory and computation.

Without loss of generality, one can repose geometric programming as
\begin{eqnarray}
    \text{minimize} \hspace{.1in} \ln f(\by) & &  \nonumber \\
    \text{subject to} \hspace{.1in} \ln g_i(\by)  &=&  0, \;\; 1 \le i \le r \label{eqn:GP-convexform}  \\
    \ln h_j(\by)  &\le&  0, \;\; 1 \le j \le s,  \nonumber
\end{eqnarray}
where $f(\by)$ and the $h_j(\by)$ are posynomials and the equality constraints $\ln g_i(\by)$ are affine. In this exponential parameterization setting, it is easy to state necessary and sufficient conditions for strict convexity and coerciveness.
\begin{proposition}
\label{lem:QP}
The objective function $f(\by)$ in the geometric program (\ref{eqn:GP-convexform}) is strictly convex if and only if the subspace spanned by the vectors $\{\balpha\}_{\balpha \in S}$ is all of $\mathbb{R}^n$; $f(\by)$ is coercive if and only if the polar cone
$\{ \bz : \bz^t \balpha \le 0 \; \mbox{for all} \;  \balpha \in S\}$ reduces to the origin ${\bf 0}$. Equivalently, $f(\by)$ is coercive if the origin ${\bf 0}$ belongs to the interior of the convex hull of the set $S$.
\end{proposition}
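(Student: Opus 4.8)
The plan is to read all three assertions off the representations of $\nabla f(\by)$ and $d^2 f(\by)$ recorded just above, combined with the half-line criterion for coercivity (Proposition 3.2.2 of \citep{Bertsekas03ConvexBook}) already invoked in the proof of Proposition \ref{proposition1}.

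First, for strict convexity I would examine the quadratic form attached to the Hessian. For any vector $\bv$,
$$\bv^t d^2 f(\by) \bv = \sum_{\balpha \in S} c_{\balpha} e^{\balpha^t \by} (\balpha^t \bv)^2 \ge 0,$$
with equality precisely when $\balpha^t \bv = 0$ for every $\balpha \in S$, since each weight $c_{\balpha} e^{\balpha^t \by}$ is strictly positive. Hence $d^2 f(\by)$ is positive definite at every $\by$ if and only if the only $\bv$ orthogonal to all of $S$ is $\bv = {\bf 0}$, that is, if and only if $\{\balpha\}_{\balpha \in S}$ spans $\mathbb{R}^n$; in that case $f$ is strictly convex. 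For the converse I would \emph{not} argue through the Hessian, because strict convexity does not in general force a positive definite Hessian; instead I would exploit the exponential structure directly. If the span is a proper subspace, choose $\bv \ne {\bf 0}$ with $\balpha^t \bv = 0$ for all $\balpha$, and observe that $f(\by + t\bv) = \sum_{\balpha \in S} c_{\balpha} e^{\balpha^t \by} e^{t \balpha^t \bv} = f(\by)$ is constant in $t$, so $f$ fails to be strictly convex.

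For coercivity I would restrict $f$ to an arbitrary half-line $\by_0 + t\bz$ with $t \ge 0$ and $\bz \ne {\bf 0}$, obtaining $f(\by_0 + t\bz) = \sum_{\balpha \in S} c_{\balpha} e^{\balpha^t \by_0} e^{t \balpha^t \bz}$. As $t \to \infty$ this tends to $\infty$ whenever some $\balpha^t \bz > 0$, and it stays bounded when $\balpha^t \bz \le 0$ for every $\balpha$. By the half-line criterion, $f$ is coercive if and only if every direction $\bz \ne {\bf 0}$ admits an $\balpha$ with $\balpha^t \bz > 0$ — equivalently, no nonzero $\bz$ satisfies $\bz^t \balpha \le 0$ for all $\balpha \in S$, which is exactly the statement that the polar cone collapses to ${\bf 0}$.

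Finally, to connect with the convex hull I would recast the polar-cone condition as a support-function statement. A linear functional over a polytope attains its maximum at a vertex, so $\sup_{\bx \in \text{conv}(S)} \bz^t \bx = \max_{\balpha \in S} \bz^t \balpha$. The standard support-function characterization of interior points says ${\bf 0} \in \text{int}\,\text{conv}(S)$ if and only if this supremum is strictly positive for every $\bz \ne {\bf 0}$, which is identical to the coercivity condition just derived; moreover a separation argument shows that a trivial polar cone already forces ${\bf 0} \in \text{conv}(S)$, so the interior criterion genuinely applies. The steps demanding the most care are the converse for strict convexity, where the exponential structure rather than the Hessian supplies the argument, and the separation step guaranteeing ${\bf 0} \in \text{conv}(S)$ before the interior characterization can be used.
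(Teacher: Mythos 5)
Your proof is correct, but the comparison with the paper is necessarily lopsided: the paper offers no argument at all for this proposition, deferring entirely to the companion reference \citep{ZhouLange11GP}, so your self-contained derivation is genuinely different by virtue of existing. Each of its three steps is sound. For strict convexity you rightly treat the two directions asymmetrically: positive definiteness of $d^2f(\by)=\sum_{\balpha\in S}c_{\balpha}e^{\balpha^t\by}\balpha\balpha^t$, which holds at every $\by$ exactly when $S$ spans $\mathbb{R}^n$, yields strict convexity, whereas for the converse the Hessian cannot settle the matter and you instead exhibit a nonzero $\bv$ orthogonal to every $\balpha\in S$, along which $f$ is constant --- this is the right fix, since strict convexity does not in general imply a positive definite Hessian. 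For coercivity, restricting to half-lines $\by_0+t\bz$ and invoking the half-line criterion of \citep[Proposition 3.2.2]{Bertsekas03ConvexBook} --- the same fact the paper already uses in proving Proposition \ref{proposition1} --- gives precisely the polar-cone characterization: boundedness along the half-line when every $\balpha^t\bz\le 0$, divergence when some $\balpha^t\bz>0$. Finally, the support-function identity $\sup_{\bx\in\text{conv}(S)}\bz^t\bx=\max_{\balpha\in S}\bz^t\balpha$ combined with the interior-point characterization shows the polar cone is trivial if and only if ${\bf 0}\in\text{int}\,\text{conv}(S)$; this actually strengthens the proposition's final sentence from an ``if'' to an ``if and only if.'' One cosmetic remark: your closing step about separately forcing ${\bf 0}\in\text{conv}(S)$ is redundant, since the support-function characterization of interior points already subsumes it, but it does no harm.
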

\begin{proof}
These claims are proved in detail in our paper \citep{ZhouLange11GP}.
\end{proof}

According to Propositions \ref{prop:conv-coercive} and \ref{prop:solpath-uniq-cont}, the strict convexity and coerciveness of $f(\by)$ guarantee the uniqueness and continuity of the solution path in $\by$. This in turn implies the uniqueness and continuity of the solution path in the original parameter vector $\bx$.  The path directions are related by the chain rule
\begin{eqnarray*}
    \frac{d}{d\rho} x_i(\rho) &  = & \frac{d x_i}{dy_i} \frac{d}{d\rho} y_i(\rho) \;\; = \;\;  x_i \frac{d}{d\rho} y_i(\rho).
\end{eqnarray*}

As a concrete example, consider the problem
\begin{eqnarray}
    &\text{minimize}& \hspace{.1in} x_1^{-3} + 3 x_1^{-1} x_2^{-2} + x_1 x_2 \label{eqn:GP-toy}    \\
    &\text{subject to}& \hspace{.1in} \frac 16 x_1^{1/2} + \frac 23 x_2  \le  1, \;\; x_1>0, \; x_2>0. \nonumber
\end{eqnarray}
It is easy to check that the vectors $\{(-3,0)^t,(-1,-2)^t,(1,1)^t\}$ span $\mathbb{R}^2$ and generate a convex hull strictly containing the origin ${\bf 0}$. Therefore, $f(\by)$ is strictly convex and coercive.  It achieves its unconstrained minimum at the point $\bx(0) = (\sqrt[5]{6}, \sqrt[5]{6})^t$, or equivalently $\by(0) = (\ln 6/5,\ln 6/5)^t$. To solve the constrained minimization problem, we follow the path dictated by the revised geometric
program (\ref{eqn:GP-convexform}).
Figure \ref{fig:gp-toy1} plots the trajectory from the unconstrained solution to the constrained solution in the original $\bx$ variables. The solid lines in the figure represent the contours of the objective function $f(\bx)$, and the dashed lines represent the contours of the constraint function $h(\bx)$. The {\tt ODE45} solver of {\sc Matlab} evaluates derivatives at seven time points along the path.

\begin{figure}
\begin{center}
$$
\includegraphics[width=3in]{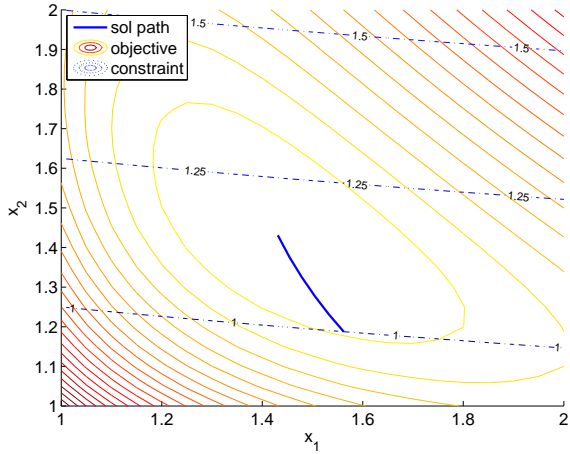}
$$
\end{center}
\caption{Trajectory of the exact penalty path algorithm for the geometric programming problem (\ref{eqn:GP-toy}). The solid lines are the contours of the objective function $f(\bx)$. The dashed lines are the contours of the constraint function $h(\bx)$ at levels 1, 1.25, and 1.5.}
\label{fig:gp-toy1}
\end{figure}

\begin{example}
Semidefinite Programming (SDP)
\end{example}

The linear semidefinite programming problem \citep{VandenbergheBoyd96SDP} consists in minimizing the trace
function $\bX \mapsto \tr(\bC \bX)$ over the cone of positive semidefinite matrices
$S_+^n$ subject to the linear constraints $\tr(\bA_i \bX) = b_i$ for $1 \le i \le p$.
Here $\bC$ and the $\bA_i$ are assumed symmetric. According to Sylvester's criterion,
the constraint $\bX \in S_+^n$ involves a complicated system of inequalities
involving nonconvex functions.  One way of cutting through this morass is to focus
on the minimum eigenvalue $\nu_1(\bX)$ of $\bX$. Because the function $-\nu_1(\bX)$
is convex, one can enforce positive semidefiniteness by requiring $-\nu_1(\bX) \le 0$.
Thus, the linear semidefinite programming problem is a convex program in the
standard functional form.

It simplifies matters enormously to assume that $\nu_1(\bX)$ has multiplicity 1.
Let $\bu$ be the unique, up to sign, unit eigenvector corresponding to $\nu_1(\bX)$.   The matrix
$\bX$ is parameterized by the entries of its lower triangle. With these conventions,
the following formulas
\begin{eqnarray}
-\frac{\partial}{\partial x_{ij}}  \nu_1(\bX) & = &  -\bu^t \frac{\partial}{\partial x_{ij}} \bX \bu \label{eqn:v1-grad} \\
-\frac{\partial^2 }{\partial x_{ij} \partial x_{kl} }  \nu_1(\bX)
& = & - \bu^t  \frac{\partial}{\partial x_{ij}} \bX (\nu_1 \bI-\bX)^- \frac{\partial}{\partial x_{kl}}  \bX \bu \nonumber \\
&   & - \bu^t  \frac{\partial}{\partial x_{kl}} \bX (\nu_1 \bI-\bX)^- \frac{\partial}{\partial x_{ij}}  \bX \bu \nonumber \\
& = & - 2 \bu^t  \frac{\partial}{\partial x_{ij}} \bX (\nu_1 \bI-\bX)^- \frac{\partial}{\partial x_{kl}}  \bX \bu \label{eqn:v1-hess}
\end{eqnarray}
for the first and second partial derivatives of $-\nu_1(\bX)$ are well known \citep{MagnusNeudecker99MatrixBook}.
Here the matrix $(\nu_1 \bI - \bX)^-$ is the Moore-Penrose inverse of $\nu_1 \bI - \bX$.
The partial derivative of $\bX$ with respect to its lower triangular entry $x_{ij}$
equals $\bE_{ij}+1_{\{i \ne j\}} \bE_{ji}$, where $\bE_{ij}$ is the matrix consisting of all 0's excepts for a 1
in position $(i,j)$. Note that $\bu^t \bE_{ij} = u_i \be_j^t$ and $\bE_{kl} \bu = u_l \be_k$ for the
standard unit vectors $\be_j$ and $\be_k$.  The second partial derivatives of $\bX$ vanish.
The Moore-Penrose inverse is most easily expressed in terms of the spectral decomposition
of $\bX$.  If we denote the $i$th eigenvalue of $\bX$ by $\nu_i$ and the corresponding $i$th unit eigenvector
by $\bu_i$, then we have
\begin{eqnarray*}
(\bX-\nu_1 \bI)^- & = & \sum_{i > 1} \frac{1}{\nu_i-\nu_1} \bu_i \bu_i^t.
\end{eqnarray*}
Finally, the formulas
\begin{eqnarray*}
\tr(\bA_i \bX) -b_i  & = & \sum_k (\bA_i)_{kk}x_{kk} + 2\sum_k \sum_{l<k} (\bA_i)_{kl}x_{kl} -b_i \\
\frac{\partial}{\partial x_{kl}} [\tr(\bA_i \bX)-b_i] & = & (\bA_i)_{kl}+1_{\{k \ne l\}} (\bA_i)_{lk}
\end{eqnarray*}
express the linear constraints and their partial derivatives in terms of the lower
triangular entries of $\bX$.

Initiating path following is problematic because $\tr(\bC \bX)$ has minimum $-\infty$.
A good strategy is to amend the surrogate function ${\mathcal E}_{\rho}(\bx)$ by adding the term
$\frac{\epsilon(\rho)}{2} \|\bX\|_{\text{F}}^2$, where $\epsilon(\rho)$ is a smooth positive function that
decreases to 0. Taking $\epsilon(\rho) = e^{-c\rho}$ for $c$ positive works well in practice.
The new surrogate function $\tr(\bC \bX) + \frac{\epsilon(\rho)}{2} \|\bX\|_{\text{F}}^2$ is strictly convex
and possesses a unique minimum for all $\rho \ge 0$. In view of the identities
$ \|\bX\|_{\text{F}}^2 = \sum_i \sum_j x_{ij}^2 $ and $\tr(\bC \bX) = \sum_i \sum_j c_{ij} x_{ij}$
for $\bX=(x_{ij})$ and $\bC = (c_{ij})$, the initial condition $\bX(0) = - \epsilon(0)^{-1} \bC$
is straightforward to deduce.

Path following must be modified to accommodate the new surrogate function. In the notation of \citep{MagnusNeudecker99MatrixBook}, let $\bx = \text{v}(\bX)$ be the $\frac 12 n(n+1)$ vector obtained from $\text{vec}(\bX)$ by eliminating all supradiagonal entries, and let $\bD$ be the $n^2 \times \frac 12 n(n+1)$ duplication matrix satisfying $\text{vec}(\bX) = \bD \bx$.  Applying the chain rule to the obvious identities $ \|\bX\|_{\text{F}}^2 = \bx \bD^t \bD \bx$
and $\tr(\bC \bX) = \text{vec}(\bC)^t \bD \bx$, one can extend the derivation of Proposition \ref{prop:ode} and prove that
\begin{eqnarray*}
&    & {d \over d\rho}\! \left[ \begin{matrix} \bx \\ \blambda_{{\mathcal Z}_{\text{E}}}   \\ \bomega_{{\mathcal Z}_{{\mathcal I}}} \end{matrix} \right] \\
& = & - \left[ \begin{matrix} \epsilon(\rho) \bD^t \bD -  \bomega_{{\mathcal Z}_{{\mathcal I}}} d^2 \nu_1(\bx) 1_{\{\nu_1(\bX)=0\}} & \bU_{{\mathcal Z}}^t \\
\bU_{{\mathcal Z}} & {\bf 0}
\end{matrix} \right]^{-1} \\
& & \! \times \! \left( \begin{matrix}
    \frac{d\epsilon(\rho)}{d \rho} \bD^t \bD \bx  -  \sum_{i \in {\mathcal N}_{\text{E}}} \bD^t \text{vec}(\bA_i) +  \sum_{i \in {\mathcal P}_{\text{E}}} \bD^t \text{vec}(\bA_i)  -  \nabla \nu_1(\bx)  1_{\{\nu_1(\bX)<0\}}  \\
    {\bf 0}
\end{matrix} \right).
\end{eqnarray*}
Path following proceeds until all constraints are satisfied and $\epsilon(\rho)$ is negligible.

For didactic purposes, considering the problem of minimizing $\tr(\bC \bX)$ subject to
\begin{eqnarray*}
    \tr(\bA_1\bX) & = & 1, \quad \tr(\bA_2\bX)=2, \:\: \text{ and } \:\: \:  \bX \in {\mathcal S}_+^2 ,
\end{eqnarray*}
where
\begin{eqnarray*}
    \bC = \left( \begin{matrix} 0 & \frac 12 \\ \frac 12 & 0 \end{matrix} \right), \quad \bA_1  =  \left( \begin{matrix} 1 & 0 \\ 0 & 0 \end{matrix} \right), \:\: \text{ and } \:\:\: \bA_2 = \left( \begin{matrix} 0 & 0 \\ 0 & 1 \end{matrix} \right).
\end{eqnarray*}
Figure \ref{fig:sdp-toy1} displays the solution paths of the entries $x_{ij}$ of $\bX$ and the minimum eigenvalue $\nu_1$ . Here we use $\epsilon(\rho)=e^{-\rho}$. The path starts with $\bX(0) = - \bC$, hits, slides along, and exits various constraints, and ends at the constrained solution $
\small \left(\begin{matrix} 1 & -\sqrt{2} \\ -\sqrt{2} & 2 \end{matrix} \right)$.

\begin{figure}
\begin{center}
\includegraphics[width=3in]{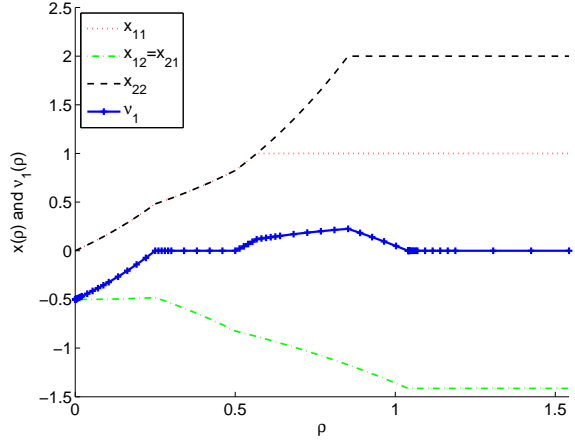}
\end{center}
\caption{Solution path of a semidefinite programming example.}
\label{fig:sdp-toy1}
\end{figure}

\begin{example}
Image Denoising
\end{example}

Image analysis is another fertile field for path following. Here we explore how to restore or enhance images by removing noise. This example differs from previous examples in that the fully constrained solution is trivial. The solution path itself is the object of interest.  Suppose that ${\bw} = (w_{ij}) \in \mathbb{R}^{m \times n}$ represents the recorded gray levels across a 2D array of pixels from a noisy image with true gray levels $\bu = (u_{ij})$. The well-known denoising model of Rudin-Osher-Fatemi (ROF) \citep{Rudin92ROF} minimizes the total variation regularized least squares criterion
\begin{eqnarray}
 & & \frac 12 \| {\bw}-\bu\|_2^2 + \rho \text{TV}(\bu) \nonumber \\
    &=& \frac 12 \sum_{ij} (w_{ij}-u_{ij})^2 + \rho \sum_{i,j} \sqrt{(u_{i+1,j}-u_{ij})^2 + (u_{i,j+1}-u_{ij})^2}. \label{osher_problem}
\end{eqnarray}
The total variation penalty serves to smooth the reconstructed image and preserve its edges. A similar effect can be achieved by replacing the isotropic penalty $\text{TV}(\bu)$ by the anisotropic penalty
\begin{eqnarray}
 \text{TV}_{1}(\bu) & = & \sum_{i,j} \Big(|u_{i+1,j}-u_{ij}|+|u_{i,j+1}-u_{ij}|\Big). \label{TV1_penalty}
\end{eqnarray}

In this example we focus on path following for the anisotropic penalty and a more general convex loss function $f(\bu)$. The objective function is now
\begin{eqnarray}
f(\bu) + \rho \|\bD \bu\|_1. \label{eqn:denoise-gen}
\end{eqnarray}
For instance, the amended loss function $f(\bu)=\frac{1}{2} \|\bw - \bK \bu \|_2^2$ with a Gaussian or motion blurring matrix $\bK$ is appropriate in many imaging problems. Poisson count data are relevant to image reconstruction in X-ray and positron tomography \citep{Lange10NumAnalBook} and to image denoising in certain circumstances \citep{LeChartrandAsaki07PoissonROF}.  With Poisson noise,  the least squares criterion is replaced by a negative loglikelihood. The difference matrix $\bD$ captures the $\ell_1$ penalty (\ref{TV1_penalty}). Note that the matrices $\bw$ and $\bu$ are now viewed as vectors. For an $m \times n$ 2D image, the difference matrix $\bD$ has $2mn-m-n$ rows  (penalties) and $mn$ columns (pixels). This matrix is very sparse, with just $2(2mn-m-n)$ nonzero entries equal to $\pm1$. When $m$ and $n$ are both at least 2, $\bD$ has more rows than columns and a reduced column rank of $mn-1$.

For sufficiently large $\rho$, the minimum of the objective functions (\ref{osher_problem}) reduces to a constant vector (blank image) equal to the average value $\bar{w}$ of the $w_{ij}$. The goal of image denoising is to find a $\rho$ such that the recovered image is judged satisfactory by visual inspection or other more quantitative criteria.  Notable computational advances in solving this problem include Chambolle's algorithm \citep{Chambolle04TVAlgo} and split Bregman iteration \citep{Goldstein09SplitBregman}.  These methods minimize the objective functions (\ref{osher_problem}) and (\ref{eqn:denoise-gen}) for a fixed value of $\rho$. The web site of UCLA's Computational and Applied Math Group summarizes the most recent progress in this area. In reality, outer iterations are almost always required to tune the parameter $\rho$. Path following is an attractive option because it provides the whole solution path at about the same computational cost as recovering the solution for an individual $\rho$.

Although it is tempting to minimize the criterion (\ref{eqn:denoise-gen}) by path following, the regularization matrix $\bD$ has linearly dependent rows and deficient rank.  Because the assumptions of Proposition \ref{prop:solpath-uniq-cont} are violated, the multipliers $\blambda_{\text{E}}$ of the active constraints in equations (\ref{short_stationaryeq}) and (\ref{eqn:sol-ode-QP}) are not uniquely determined. One can intuitively understand the difficulty by considering a square with four pixels. Whenever any three constraints are active, the fourth is automatically active as well. This constraint redundancy can be remedied by reparameterizing the model in terms of neighboring pixel differences $\bx = \bD \bu$.  Unfortunately, the rank deficiency of $\bD$ is also an issue.  Adding the same constant to all of the components of $\bu$ yields exactly the same $\bx$.  To circumvent this problem, we simply append a bottom row to $\bD$ with all entries 0 except for a 1 in the last position. If $\bV$ is the amended version of $\bD$, then $\bV$ has full column rank, and the vector $\bx = \bV \bu$ uniquely determines the image.  Indeed, one can solve for $\bx$ in the form $\bu = (\bV^t \bV)^{-1} \bV^t \bx$.  The bottom entry of $\bx$ is obviously the gray level of the last pixel of the image.

Despite the presence of the inverse  of the huge $mn \times mn$ matrix $\bV^t \bV\!$, the transformation $\bu = (\bV^t \bV)^{-1} \bV^t\bx$ is not as daunting as it appears.  First of all, multiplication by the sparse matrix $\bV^t$ is trivial. More importantly, the matrix $\bV^t \bV$ is symmetric, banded, and extremely sparse.  To count its nonzero entries, note that except for diagonal entries, these entries occur in the same positions as the nonzero entries of the adjacency matrix of a corresponding graph with $2mn-m-n$ edges and $mn$ nodes. Because an adjacency matrix has twice as many nonzero entries as edges, the matrix $\bV^t \bV$ has at most $2(2mn-m-n)+mn=5mn-2m-2n$ nonzero entries. These occur within a band of width $\min\{m,n\}$ along the main diagonal, depending on whether we stack columns or concatenate rows.  The most convenient way to solve equations of the kind $\bV^t \bV \ba = \bb$ is to extract the Cholesky decomposition $\bL$ of $\bV^t \bV$ and execute forward and backward substitution.  Although extraction of $\bL$ is cheap for banded matrices, it is even cheaper for banded matrices with just a handful of nonzero entries per row.  In our experience, the computational complexity of extracting $\bL$ scales linearly in the product $mn$. Since $\bL$ itself is sparse, forward and backward substitution are also very cheap. For instance with a $256 \times 256$ image, {\sc Matlab} computes $\bL$ (a $65536 \times 65536$ matrix) in 0.26 seconds on a laptop; $\bL$ contains just 1,971,395 nonzero entries. The sparsity of $\bL$ suggests that it be computed once and stored in compressed format for all images of a given size. Many of its nonzero entries are close to zero. Thus, a fairly light truncation of the non-diagonal entries of $\bL$ gives an even sparser matrix realizing nearly the same transformation. Figure \ref{fig:sparsity-pattern} displays the sparsity pattern of the matrix $\bV^t\bV$ and its permuted Cholesky factor $\bL$ for $64 \times 64$ images. Images of other sizes show similar sparsity patterns.
\begin{figure}
\begin{center}
$$
\begin{array}{cc}
\includegraphics[width=2.25in]{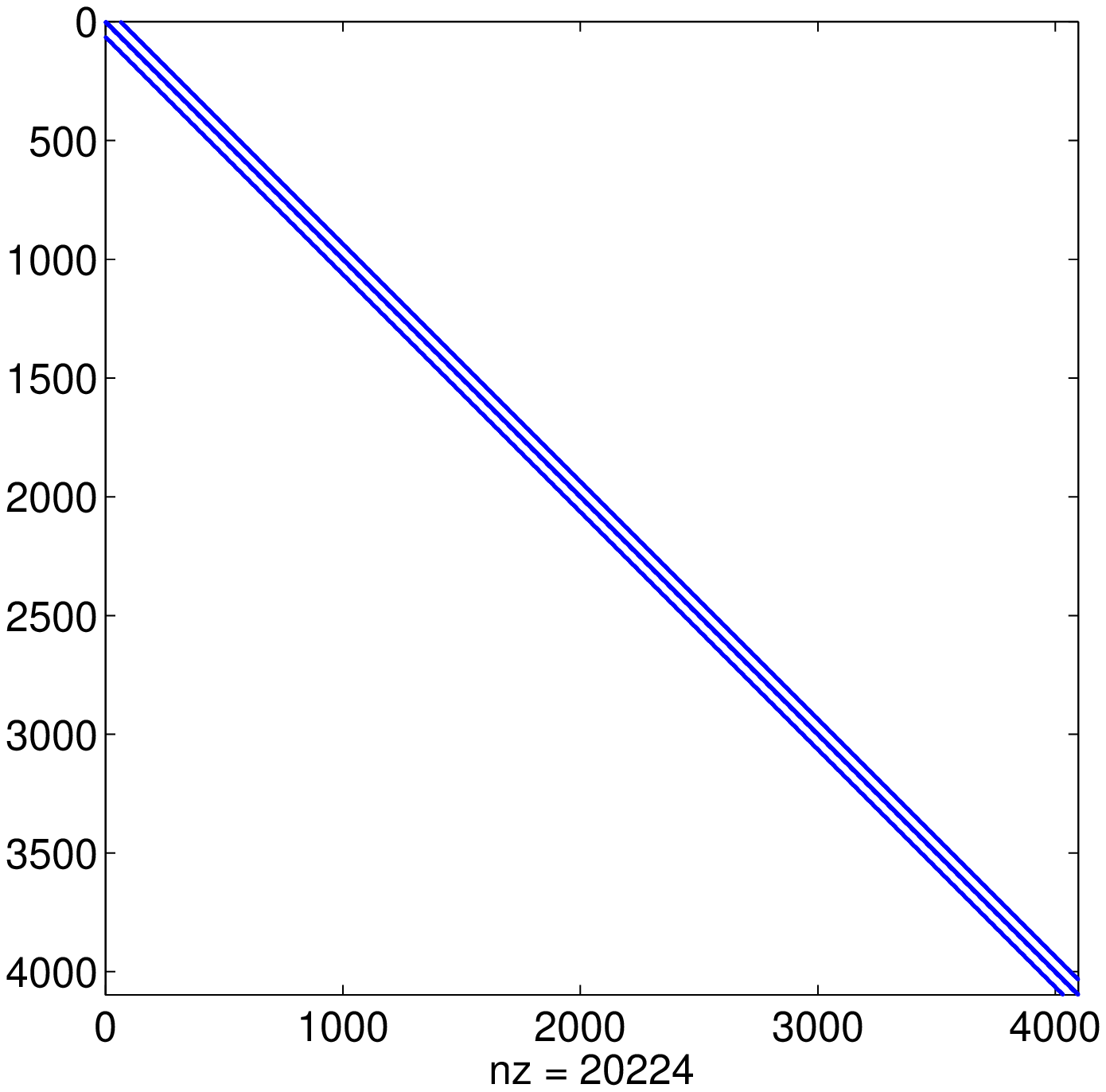} & \includegraphics[width=2.25in]{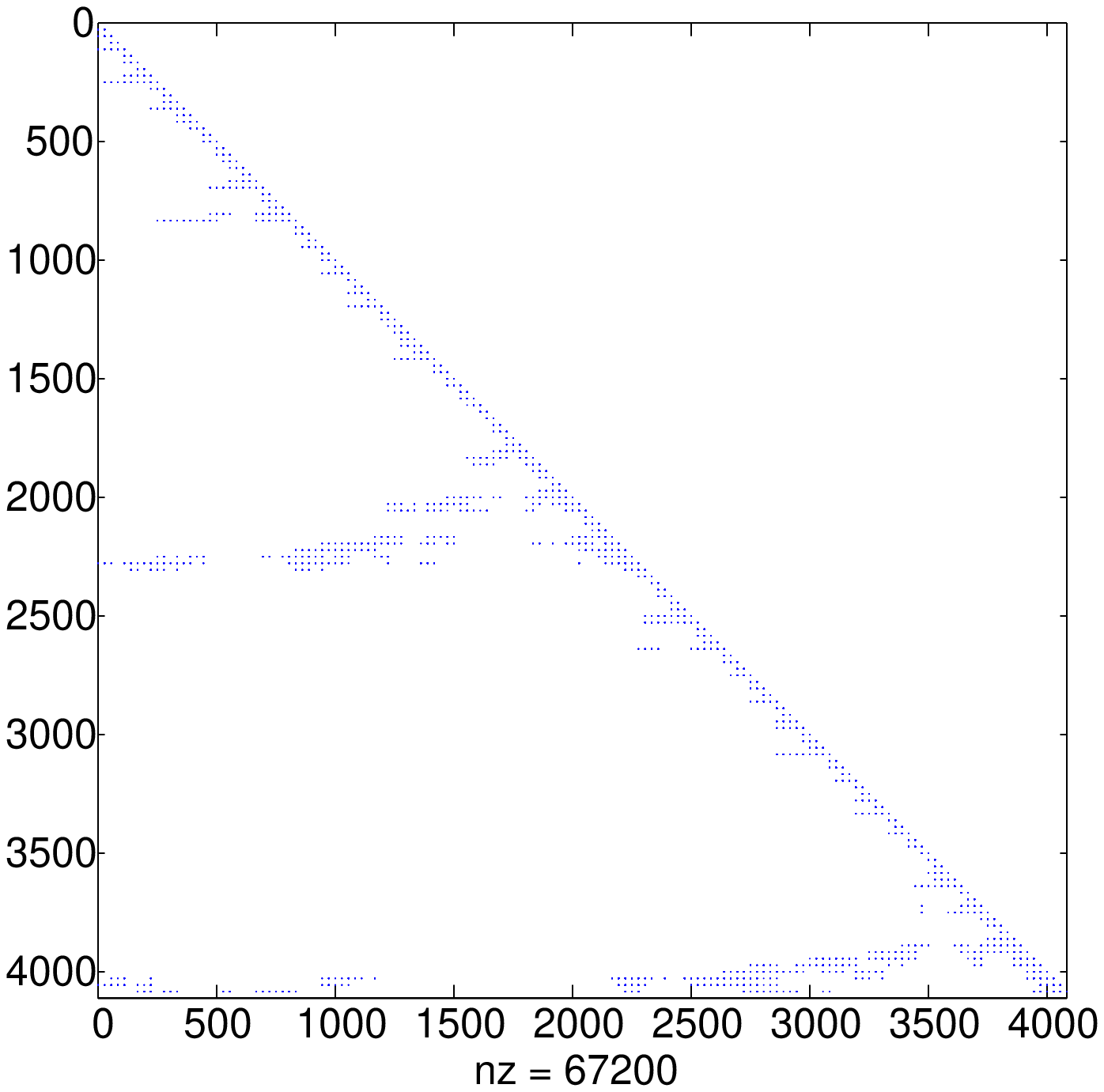}
\end{array}
$$
\end{center}
\caption{Sparsity patterns of $\bV^t\bV$ and its Cholesky decomposition $\bL$ for 64-by-64 images.}
\label{fig:sparsity-pattern}
\end{figure}

The problem of minimizing the objective function $\frac{1}{2} \|\bw - \bK \bu \|_2^2  + \rho \|\bD \bu\|_1$ in the transformed variable
$\bx$ turns out to coincide with lasso penalized regression, for which an efficient path algorithm is known \citep{EfronHastieIainTibshirani04LARS,OsbornePresnellTurlach00LassoAlgo}.  Let us sketch how path following works in the more general case.  The objective function is $f(\bB \bx)+ \rho \|\bx_-\|_1$, where $\bB =  (\bV^t \bV)^{-1} \bV^t$ and $\bx_-$ denotes the vector $\bx$ with its last entry deleted. The penalty contributions correspond to affine equality constraints in constrained minimization. In path following, the penalty constant $\rho$ starts large and moves downward. The initial image is flat with gray level determined by taking $\bx_-={\bf 0}$ and adjusting the last entry of $\bx$ to minimize $f(\bB \bx)$.  Call this point $\bx_\infty$. The first escape time occurs at $\rho_{\text{max}}=\max_j |(\bB^t \nabla f(\bB \bx_\infty)_j|$.  At this juncture path following begins in earnest. Under the $\bx$ parameterization, the loss function has gradient $\bB^t \nabla f(\bB \bx)$ and second differential $\bB^t d^2f(\bB \bx) \bB$.  Because $f(\bB\bx)$ is not strictly convex, our previous reparameterization from $\bx$ to $\by$ variables is needed. Based on equation (\ref{eqn:sol-ode-alt}), the path ODEs reduce to
\begin{eqnarray}
    \frac{d}{d\rho} \bx_{\bar {\mathcal Z}} &=& - (\bB_{\bar {\mathcal Z}}^t d^2f(\bB\bx) \bB_{\bar { \mathcal Z}})^{-1} \text{sgn}(\bx_{\bar {\mathcal Z}}), \hspace{.2in}
    \frac{d}{d\rho} \bx_{\mathcal Z} = {\bf 0},   \nonumber \\
    \frac{d}{d\rho}\blambda_{\mathcal Z} &=& - \bB_{\mathcal Z}^t d^2f(\bB\bx) \bB_{\bar {\mathcal Z}} \frac{d}{d\rho} \bx_{\bar {\mathcal Z}}.
    \label{lasso_directions}
\end{eqnarray}
Observe that the updates of equation (\ref{eqn:sol-ode-alt}) drastically simplify because the rows of the active constraint matrix $\bU_{\mathcal Z}$ and the columns of its null space matrix $\bY$ are populated by standard Euclidean unit vectors. Furthermore, for the ROF model of image denoising, $d^2f(\bB\bx)$ is a diagonal matrix. Alternatively, one can derive the ODE equations (\ref{lasso_directions}) from first principles by implicitly differentiating the stationary conditions. Path following solves the coupled ODEs (\ref{lasso_directions}) segment by segment.

For a quadratic loss function, the second differential is constant, and the solution path is piecewise linear. Thus no ODE solving is involved. With a blurring matrix $\bK$, the second differential is $\bB^t d^2f(\bB \bx) \bB = \bB^t \bK^t \bK \bB$.  After each path extension, the path directions (\ref{lasso_directions}) yield the next event time $\rho_j$ at which a nonzero component $x_j$ hits zero, or a multiplier $\lambda_j$ of a zero component $x_j$ hits $\rho$ or $-\rho$.  The path is then extended to the closest of these event times. In deblurring or denoising, the inverse of $\bB_{\bar {\mathcal Z}}^t \bK^t \bK \bB_{\bar { \mathcal Z}}$ is best computed via a QR decomposition of $\bB_{\bar {\mathcal Z}}\bK$.  At each kink in the path, $\bB_{\bar {\mathcal Z}}\bK $ changes by adding or deleting a column of $\bB \bK$. As we mentioned earlier, it is straightforward to update or downdate the QR decomposition \citep{LawsonHanson87LSBook}. In the original ROF model, traversing one time segment requires about $O(p)$ operations for $p=mn$ total pixels. The whole process ends when $T$ differences $x_j$ becomes nonzero.  In practice, a large value of $T$ recovers too grainy an image, so $T$ is typically much smaller than $p$.   The total cost of computing the solution path is approximately $O(Tp)$,  which is comparable to the cost of start-of-art algorithms for minimization at a single $\rho$.

Figure \ref{fig:lighthouse} illustrates denoising of a $112 \times 91$ image of a lighthouse. The corrupted image appears in the top-left corner of the
figure. The $p=10,192$ pixels generate $20,182$ transformed variables. It takes our {\sc Matlab} script about one minute of desktop computing time  to traverse $T=2,500$ segments along the regularization path from $\rho=87.9881$ (blank image) to $\rho=0.5206$ (a nearly optimal image).  In the
process, the lighthouse clearly emerges from the fog of oversmoothing.  Figure \ref{fig:lighthouse} displays selected snapshots along the regularization path. We emphasize that path following based on equation (\ref{eqn:denoise-gen}) reveals the entire path for the interval  [0.5206,87.9881] of $\rho$ values. In practice, one can accelerate path following by starting from a $\rho$ nearer to the ultimate destination.

\begin{figure}
\begin{center}
\includegraphics[width=4.5in]{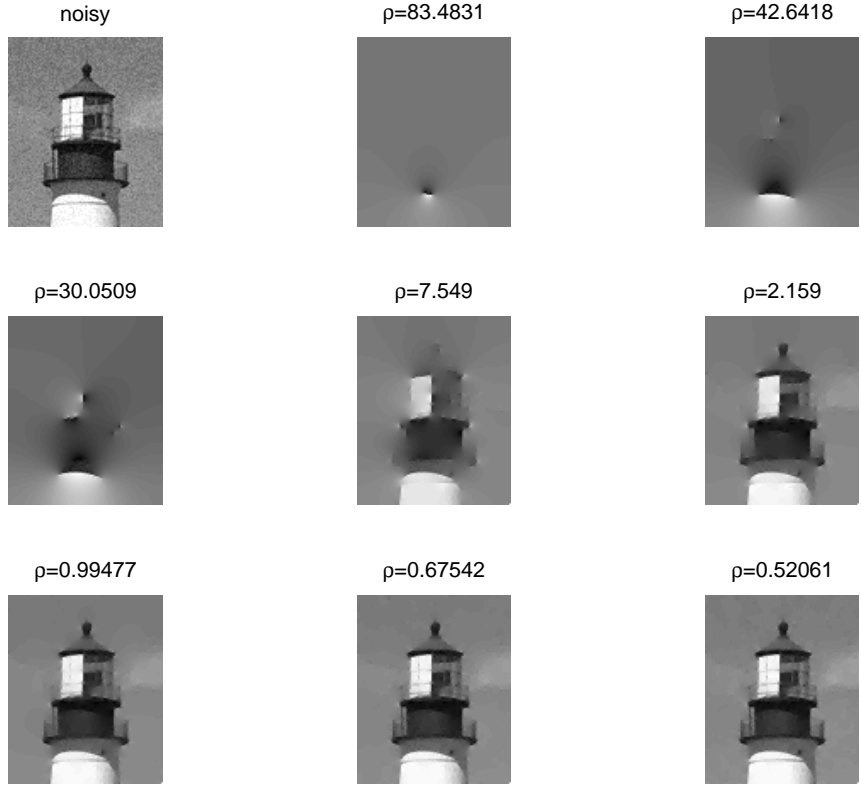}
\end{center}
\caption{A noisy image and snapshots along the regularization path.}
\label{fig:lighthouse}
\end{figure}

\section{Discussion}
\label{sec:conclusions}

Our path following algorithm for constrained convex optimization builds on but differs from the tradition of path following in homotopy methods \citep{zangwill1981pathways} and interior point programming \cite{BoydVandenberghe04Book}.  The paths encountered in the exact penalty method introduce the novelty of piecewise differentiability, which can be effectively handled by tracking the Langrange multipliers.  Computational statisticians deserve credit for exploring this difficult terrain \citep{EfronHastieIainTibshirani04LARS,OsbornePresnellTurlach00LassoAlgo,ZhouLange11LSPath,ZhouWu11EPSODE}. To our knowledge we are the first to make the connection to exact penalty methods.

Our algorithm enjoys the dual advantages of simplicity and generality.  Given the rich numerical resources of {\sc Matlab}, it is straightforward to solve the required ODEs segment by segment. Regardless of whether path following is faster or slower than existing optimization methods, it supplies the whole solution path.  In regularized estimation, this level of detail offers unprecedented insight into how penalties and predictors interact. Our example on image denoising is a case in point.

In quadratic programming with affine equality and inequality constraints, the solution path is piecewise linear \citep{ZhouLange11LSPath}.  This permits path following to take large steps.  Furthermore, each step can be implemented very efficiently by the sweep operator of computational statistics.  Despite the loss of these advantages in more complicated examples, the real culprit in path-following deceleration in many applications is an excessive number of constraints to be navigated.  Our image denoising example suffers from this defect.  On the positive side of the ledger, in nonconvex problems path following may well prove to be more reliable than competing methods in separating global from local minima \citep{ZhouLange09Annealing}.

Various extensions of path following are in order. First, the current algorithm commences from the unconstrained solution.  Our development
relies on the strict convexity and coerciveness of the objective function to ensure a unique starting point.  In principle, path initiation should work for any problem with a unique unconstrained minimum.  Similarly, path continuation should be possible whenever the interior solution is well defined and piecewise smooth. As the image denoising example suggests, reparametrization can play an important role in correcting defects in strict convexity.  Another possibility is to amend the surrogate function ${\mathcal E}_{\rho}(\bx)$. In our semidefinite programming example, we add the term $e^{-c \rho}\|\bX\|_{\text{F}}^2$ to enforce strict convexity and coerciveness.  A similar tactic obviously works in other examples.

A second generalization is to expand the list of penalty functions.  For instance, Euclidean penalties of the form $\|\bM \bx + \ba\|_2$ are useful in grouping parameters in statistical problems.  It should be straightforward to extend path following to include such penalties. A third generalization is to remove convexity restrictions altogether.  As we have noted, the exact penalty method applies equally to nonconvex programming. Path following in this setting is nontrivial since the solution path is no longer necessarily continuous. This poses a real challenge, and it is unclear to us whether one can construct a theory as satisfying as that standing behind modern interior point methods.  We invite the optimization community to tackle this broader issue.  In the meantime, we are happy to share our {\sc Matlab} code with interested researchers.

\bibliography{../../../bib-HZ}
\bibliographystyle{Chicago}

\end{document}